\theoremstyle{plain}
\newtheorem{theorem}{Theorem}[section]
\newtheorem{lemma}[theorem]{Lemma}
\theoremstyle{definition}
\newtheorem{definition}[theorem]{Definition}
\newtheorem{remark}[theorem]{Remark}
\numberwithin{equation}{section} 
\newcommand\blfootnote[1]{%
  \begingroup
  \renewcommand\thefootnote{}\footnote{#1}%
  \addtocounter{footnote}{-1}%
  \endgroup
}
\newcounter{comcount}
\begin{document}

\title{Some Remarks on the Riesz and reverse Riesz transforms on Broken Line} 
\date{}
\author{DANGYANG HE}  
\address{Department of Mathematics and Statistics, Macquarie University}
\email{dangyang.he@students.mq.edu.au, hedangyang@gmail.com}

\begin{abstract}
In this note, we study both the Riesz and reverse Riesz transforms on broken line. This model can be described by
\begin{equation*}
    \Tilde{\mathbb{R}}=(-\infty,-1]\cup [1,\infty)
\end{equation*}
equipped with the measure
\begin{align*}
    d\mu = \begin{cases}
        |r|^{d_1-1}dr, & r\le -1,\\
        r^{d_2-1}dr, & r\ge 1,
    \end{cases}
\end{align*}
where $d_1,d_2>1$. For the Riesz transform, we show that the range of its $L^p\textit{-}$boundedness depends solely on the smaller “dimension", $d_*:=d_1 \wedge d_2$. Furthermore, we establish a Lorentz-type estimate at the endpoint. In our subsequent investigation, we consider the reverse Riesz inequality by rigorously verifying the $L^p\textit{-}$lower bound for the Riesz transform for almost every $p\in (1,\infty)$. Notably, unlike most previous studies, we do not assume the doubling condition or the Poincaré inequality. Our approach is based on careful estimates of the Riesz kernel and a method known as harmonic annihilation.
\end{abstract}
\maketitle

\tableofcontents

\blfootnote{$\textit{2020 Mathematics Subject Classification.}$ 42B20, 47F05}

\blfootnote{$\textit{Keywords and Phrases.}$ Riesz transform, reverse Riesz transform, endpoint estimates.}

\section{Introduction}
The Riesz transform, named after Marcel Riesz, is a fundamental operator in harmonic analysis and partial differential equations. The Riesz transform is instrumental in several areas, including characterizing function spaces such as Hardy and Sobolev spaces, establishing boundedness properties on $L^p$ spaces for $p\in (1,\infty)$, and solving elliptic partial differential equations. Its connections to the theory of singular integrals make it a critical tool in modern analysis, influencing a wide array of applications in both theoretical and applied mathematics.

On the classical Euclidean spaces $\mathbb{R}^n$, Riesz \cite{R} proved that the one-dimensional Riesz transform (Hilbert transform) is bounded on $L^p$ for all $p\in (1,\infty)$ by using complex analysis and an iteration argument. Extending this result to the multidimensional case was fraught with challenges until the emergence of the Calderón–Zygmund decomposition \cite{CZ}. In \cite{S}, several process about extending the results from standard Laplace operator to the setting of Laplace-Beltrami operator on the complete Riemannian manifolds have been established. However, in the case of Riesz transform, the question turns out to be extremely difficult. Particularly, if the manifold is volume doubling, the approaches between situations $p<2$ and $p>2$ are completely different, see \cite{CD} for $p<2$ and \cite{ACDH} for $p>2$, see also \cite{C,AC,B2,B,CD2,L} and references therein. Regarding the manifolds where the doubling condition fails, we refer readers to, for example \cite{HNS,HS,H,N}.

In this note, we build on the results of \cite{HS1D} and \cite{N,nix2019resolvent} to further investigate the Riesz transform in a one-dimensional setting. Specifically, we consider the space
\begin{align*}
    \Tilde{\mathbb{R}}= (-\infty,-1]\cup [1,\infty),
\end{align*}
endowed with the measure
\begin{equation}\label{eq_measure}
    d\mu(r) = \begin{cases}
        |r|^{d_1-1}dr, & r\le -1,\\
        r^{d_2-1}dr, & r\ge 1,
    \end{cases}
\end{equation}
where $dr$ is the usual Lebesgue measure and $d_1,d_2>1$. We define the operator
\begin{align}\label{eq_laplace}
    \Delta = \nabla^*\nabla
\end{align}
to be the Laplacian on the broken line $\Tilde{\mathbb{R}}$, where $\nabla f=f'$ is the usual derivative operator and $\nabla^*$ is its formal adjoint with respect to the measure $d\mu$. The Riesz transform is then given by $\nabla \Delta^{-1/2}$. Throughout the remainder of this note, and without loss of generality, we will assume $1<d_1<d_2$. Our motivation for studying this model stems from \cite{HS1D,GH1,GH2,Davies}.

Roughly speaking, this model captures the “radial behavior” of the Riesz transform on manifolds with ends. For example, if $d\ge 2$ is an integer, then $\nabla \Delta^{-1/2}$ models the radial part of the Riesz transform on two copies of $\mathbb{R}^d$, as considered in \cite{CCH}. Moreover, if $d_1\ne d_2$, then $\nabla \Delta^{-1/2}$ essentially corresponds to the spherical symmetric part of the Riesz transform examined in \cite{HNS,HS,H}. Nevertheless, here we consider all real $d_1,d_2>1$. 

Let $1\le p\le \infty$ and let $p'$ be the conjugate exponent, i.e. $1/p+1/p'=1$. From \cite{HS1D}, we know that if both “ends” of $\Tilde{\mathbb{R}}$ share the same “dimension,” the following theorem holds. 

\begin{theorem}{\cite[Theorem 5.6]{HS1D}}\label{thm_HS}
Let $d_1=d_2=d>1$ and $\Delta$ be as in \eqref{eq_laplace}. The Riesz transform, $\nabla \Delta^{-1/2}$, is bounded on $L^p(\Tilde{\mathbb{R}},d\mu)$ if and only if\\
$(\romannumeral1)$ $1<p<d$ for $d>2$.\\
$(\romannumeral2)$ $1<p\le 2$ for $d=2$.\\
$(\romannumeral3)$ $1<p<d'$ for $1<d<2$.
\end{theorem}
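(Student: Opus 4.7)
The plan is to reduce everything to a half-line problem. When $d_1=d_2=d$, both ends of $\Tilde{\mathbb R}$ are identical, so either by direct decomposition (viewing the space as two isomorphic copies of $[1,\infty)$ with measure $r^{d-1}dr$) or by exploiting the $\mathbb Z_2$-symmetry $r\mapsto -r$ (which preserves $d\mu$ and commutes with $\Delta$, hence splits $L^p$ into symmetric and antisymmetric sectors), matters reduce to the $L^p$-boundedness of $\partial_r L^{-1/2}$ on $L^p([1,\infty),r^{d-1}dr)$, where
\[
L \;=\; -\partial_r^2 - \frac{d-1}{r}\partial_r
\]
is a Bessel-type operator equipped with the boundary conditions at $r=1$ inherited from the Friedrichs extension (Neumann on the symmetric sector, Dirichlet on the antisymmetric one).

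To analyse $\partial_r L^{-1/2}$, I would use the subordination formula
\[
L^{-1/2} \;=\; \pi^{-1/2}\int_0^\infty t^{-1/2}e^{-tL}\,dt
\]
together with the explicit heat kernel of $L$ in terms of modified Bessel functions (cf.\ \cite{nix2019resolvent}) to obtain pointwise estimates for the kernel $K(r,s)$ of $\partial_r L^{-1/2}$. The next step is to split $K$ into a local piece, supported near the diagonal, and a long-range piece. The local piece fits within Calder\'on--Zygmund theory on the doubling space of homogeneous type $([1,\infty),r^{d-1}dr,|\cdot|)$ and is therefore bounded on $L^p$ for every $1<p<\infty$. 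The long-range piece is, by direct kernel analysis, essentially governed by the $L$-harmonic profile at infinity --- $r^{2-d}$ when $d\neq 2$ and $\log r$ when $d=2$ --- and a straightforward calculation identifies the precise range of $p$ for which the resulting integral operator is bounded on $L^p([1,\infty),r^{d-1}dr)$, reproducing exactly the asserted range in each of the three cases.

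For the sharpness direction, I would test against smooth truncations of the harmonic profile. For $d>2$ and $p\ge d$, taking $\phi_R$ a cutoff of $r^{2-d}$ supported in $r\ge R$ gives $\nabla\phi_R\sim r^{1-d}$, whose $L^d(r^{d-1}dr)$ norm diverges logarithmically with $R$, while $\Delta^{1/2}\phi_R$ remains $L^d$-bounded; an analogous logarithmic construction produces the endpoint obstruction at $(d,p)=(2,2)$, and the sub-quadratic range $1<d<2$ is handled either by reworking the long-range analysis with the now-growing harmonic profile, or dually via the reverse Riesz identity.

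The main technical obstacle is the sharp tracking of the endpoint exponent. Crude heat-kernel bounds alone leave a gap at $p=d$; pinning down the critical value requires precise long-time asymptotics of the Bessel heat kernel and its spatial derivative, matched against the small-time Calder\'on--Zygmund contribution through the $t$-integral in the subordination formula. It is the interplay of these two time regimes -- and the cancellation between them -- that ultimately isolates the borderline exponent $d$ (respectively $d'$) as the sharp threshold.
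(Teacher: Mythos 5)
Two remarks before the main point: the statement you are proving is quoted in the paper from \cite{HS1D}; the paper itself does not reprove it but instead proves the general Theorem~\ref{thm_main} by an explicit resolvent method (Bessel-function resolvent kernel, splitting into ``$kk$''/``$kl$'' and low/high energy parts, then Hardy--Hilbert-type lemmas for homogeneous kernels). Your positive direction (reduction by the $r\mapsto -r$ symmetry to Neumann/Dirichlet Bessel operators on the half-line, subordination through the heat kernel, Calder\'on--Zygmund treatment of the local part, long-range part governed by the harmonic profile) is a genuinely different and in principle viable route, but the phrase ``a straightforward calculation identifies the precise range'' conceals exactly the content of the theorem: the long-range kernel has different behaviour in the regimes $|x|\le|y|$, $|x|\ge|y|$ and on the two sectors, and the inclusion of the single point $p=2$ when $d=2$ cannot come from kernel upper bounds at all --- it holds only because of the identity $\|\nabla f\|_2=\|\Delta^{1/2}f\|_2$, which you never invoke.

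The genuine gap is the sharpness direction. Your test family is miscalculated: for $d>2$, $|\nabla(r^{2-d})|\simeq r^{1-d}$ and $\int_1^\infty r^{(1-d)d}\,r^{d-1}dr=\int_1^\infty r^{-(d-1)^2}dr<\infty$, so $\|\nabla\phi_R\|_{L^d(r^{d-1}dr)}$ does not diverge logarithmically (with your truncation to $r\ge R$ it in fact tends to $0$). The logarithmic divergence you have in mind occurs at the exponent $d'=d/(d-1)$, not $d$, and there the theorem asserts boundedness, so the family proves nothing unless you also control $\|\Delta^{1/2}\phi_R\|$ --- a nonlocal quantity whose boundedness is precisely what would need the detailed kernel analysis you are trying to avoid. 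Likewise, an ``endpoint obstruction at $(d,p)=(2,2)$'' contradicts the statement: for $d=2$ the Riesz transform \emph{is} bounded at $p=2$; the failure is only for $p>2$. The true obstruction for $p\ge d$ (resp.\ $p\ge d'$, $p>2$) sits in the low-energy part of the kernel connecting the two ends, which is comparable to $x^{-\alpha}y^{-\beta}$ with $\beta=d-1$ (resp.\ $\beta=1$) for $x\le|y|$; this is a Hardy-type tail operator, and it is defeated not by truncated harmonic profiles (compactly supported or rapidly decaying data do not see it) but by a spread-out function such as $g(y)=y^{-1}(1+\log y)^{-1}\in L^{d}$, for which the tail integral $\int_x^\infty y^{-(d-1)}g(y)\,y^{d-1}dy$ diverges identically --- this is exactly the mechanism of the paper's Lemma~\ref{lemma_Negative}, and some argument of this type (or a correct duality argument disproving the reverse inequality at $p'$) has to replace your counterexample.
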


Another important result appears in Nix’s PhD thesis \cite[Theorem 2.1]{N}, where the author examines the scenario in which one “end” has the critical dimension two, while the other “end” has dimension strictly greater than two. In fact, although \cite{N} specifically addresses the case $d_1=2$ and $d_2 \ge 3$, there is no fundamental obstruction to extending the main argument to all $d_2>2$.

\begin{theorem}{\cite[Theorem 2.1]{N}}\label{thm_Nix}
 Let $2=d_1<d_2$ and $\Delta$ be as in \eqref{eq_laplace}. Then, $\nabla \Delta^{-1/2}$ is bounded on $L^p(\Tilde{\mathbb{R}},d\mu)$ if and only if $1<p\le 2$.
\end{theorem}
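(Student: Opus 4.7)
The plan is to split Theorem \ref{thm_Nix} into its two directions and argue them with rather different tools: the $L^p$-boundedness for $1<p\le 2$ follows from standard heat-kernel machinery, while the unboundedness for $p>2$ is established via an almost-harmonic counterexample.

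For the positive range, I would first verify that $(\Tilde{\mathbb{R}},|\cdot|,\mu)$ is a doubling metric measure space: the volume of a ball of radius $R$ grows like $R$ for small $R$ and like $R^{d_2}$ for large $R$ (the larger end dominates), so the standard Calder\'on--Zygmund framework is available. Combining this with the Gaussian upper bound on $e^{-t\Delta}$ obtained from the resolvent analysis in \cite{nix2019resolvent, HS1D} places us in the setting of the Coulhon--Duong theorem, which yields $\|\nabla\Delta^{-1/2}f\|_p\lesssim \|f\|_p$ for all $1<p\le 2$. The case $p=2$ is in fact immediate from the spectral theorem: since $\nabla^*\nabla=\Delta$, one has $\|\nabla\Delta^{-1/2}f\|_2=\|f\|_2$.

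For the failure at $p>2$, I would work with the equivalent Sobolev-type inequality $\|\nabla u\|_p\le C\|\Delta^{1/2}u\|_p$. Solving $\Delta h=0$ on each end produces a two-parameter family of harmonic functions of the form $h(r)=a\log|r|+b_-$ for $r\le -1$ and $h(r)=\alpha\, r^{2-d_2}+b_+$ for $r\ge 1$; matching at the junction $\pm 1$ via continuity and the Kirchhoff-type flux condition forces $\alpha(d_2-2)=a$ and leaves two free parameters. A direct computation shows $\nabla h\in L^p(d\mu)$ if and only if $p>2$, the binding constraint being $\nabla h\sim 1/|r|$ on the $d_1=2$ end. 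Truncating $h$ by a smooth cutoff $\eta_N$ supported in $\{|r|\le 2N\}$ and equal to $1$ on $\{|r|\le N\}$, and setting $u_N:=\eta_N h$, the identity $\Delta u_N=2\nabla\eta_N\cdot\nabla h+h\,\Delta\eta_N$---which crucially uses $\Delta h=0$, the \emph{harmonic annihilation} that lends the method its name---confines the support of $\Delta u_N$ to the annulus $\{N\le |r|\le 2N\}$. I then expect $\|\nabla u_N\|_p\to\|\nabla h\|_p\in(0,\infty)$ while $\|\Delta^{1/2}u_N\|_p\to 0$, contradicting the Sobolev-type inequality.

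The principal obstacle lies in this last step: controlling $\|\Delta^{1/2}u_N\|_p=\|\Delta^{-1/2}(\Delta u_N)\|_p$ from above. Since $\Delta u_N$ has annular support but $\Delta^{-1/2}$ is non-local and sees across the junction, the argument requires sharp pointwise bounds on the kernel of $\Delta^{-1/2}(x,y)$---in particular between points on different ends of $\Tilde{\mathbb{R}}$---with the correct logarithmic factor inherited from the critical dimension $d_1=2$. Obtaining these cross-end kernel bounds via the subordination formula $\Delta^{-1/2}=\pi^{-1/2}\int_0^\infty e^{-t\Delta}\,t^{-1/2}\,dt$ and the explicit resolvent $(\Delta+\lambda)^{-1}$ on each end (built from modified Bessel functions matched by the Kirchhoff condition at $\pm 1$) is the technical heart of the argument.
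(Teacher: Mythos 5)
Your positive-direction argument rests on a false premise. The broken line with $d_1\ne d_2$ is \emph{not} volume doubling: for a point $x=-N$ on the end of dimension $d_1=2$ one has $\mu(B(x,N))\simeq N^{2}$ but $\mu(B(x,2N))\simeq N^{2}+N^{d_2}\simeq N^{d_2}$, so the doubling ratio blows up like $N^{d_2-2}$. The whole interest of this model (and the reason the abstract stresses that no doubling condition is assumed) is precisely that the two ends have different dimensions. Consequently the Coulhon--Duong theorem is not available, and Gaussian upper bounds in the doubling-normalized form are also problematic for manifolds with ends. Only your $p=2$ remark (which is immediate from $\nabla^*\nabla=\Delta$) survives. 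The paper's route for $1<p<2$ is entirely different and avoids heat kernels: it writes $\nabla\Delta^{-1/2}=\tfrac{2}{\pi}\int_0^\infty\nabla(\Delta+\lambda^2)^{-1}\,d\lambda$, computes the resolvent kernel explicitly in terms of modified Bessel functions matched at $\pm1$, splits it into low/high energy and $kk$/$kl$ pieces, and bounds each piece quadrant by quadrant using Hardy--Hilbert-type lemmas for homogeneous kernels (Lemmas~\ref{thmHLP}, \ref{le_N}, \ref{le_TH}). If you want to salvage your approach you would need a non-doubling substitute for Calder\'on--Zygmund theory, which is a serious undertaking, not a citation.

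Your negative-direction strategy (truncating the global harmonic function $h$ with $h\sim\log|r|$ on the critical end, for which $\nabla h\in L^p$ exactly when $p>2$) is a legitimate and classical line of attack, and your matching condition $a=\alpha(d_2-2)$ is correct. However, the step you yourself flag as the ``technical heart'' --- showing $\|\Delta^{1/2}u_N\|_p\to 0$, which requires sharp cross-end bounds on the kernel of $\Delta^{-1/2}$ --- is exactly where all the work lies, and without it the proof is incomplete; it is not obviously easier than the direct kernel analysis it is meant to replace. The paper's Lemma~\ref{lemma_Negative} is both more elementary and complete: having already reduced matters to the low-energy cross-end piece $IV_1$, whose kernel is comparable to $x^{-\alpha}|y|^{-\beta}$ (a genuine two-sided bound because the coefficient $A(\lambda)$ has a fixed sign), it tests this operator against the explicit function $f(y)=y^{\beta-d_1}(1+\log y)^{-1}\in L^{p_0}$ and observes that the output is identically $+\infty$. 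I would recommend adopting that kernel-level counterexample rather than the truncated-harmonic-function scheme.
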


Generally, one considers the Dirichlet Laplacian on the exterior of a convex, bounded obstacle in $\mathbb{R}^d$ with $d\ge 2$. According to \cite{KVZ} and \cite{JL}, the associated Riesz transform is $L^p\textit{-}$bounded precisely when $1<p<d$ for $d\ge 3$ and $1<p\le 2$ for $d=2$.
 
In the first part of this note, we extend Theorems~\ref{thm_HS} and~\ref{thm_Nix} by examining the following four cases:
\begin{align*}
    &\bullet 1<d_1<d_2<2, \quad
&&\bullet 1<d_1<d_2=2,\\
&\bullet 1<d_1<2<d_2, \quad
&&\bullet 2<d_1<d_2.
\end{align*}
Before presenting the main results of this note, let us first recall some notation from the theory of Lorentz spaces. For a measurable function $f$, let $d_f$ be its distribution function, defined by
\begin{equation*}
    d_f(s) = \mu(\{x; |f(x)| >s\}).
\end{equation*}
We denote by $f^*$, the decreasing rearrangement of $f$, given by
\begin{equation*}
    f^*(t) = \inf\{s>0; d_f(s)\le t\}.
\end{equation*}

\begin{definition}
Let $f$ be a measurable function defined on the measure space $(X,\mu)$. For $0<p,q\le \infty$, define
\begin{equation*} 
     \|f\|_{(p,q)}=  
    \begin{cases}
    \left( \int_0^{\infty} \left(t^{1/p} f^*(t)\right)^q \frac{dt}{t}   \right)^{\frac{1}{q}}, & q<\infty,\\
     \sup_{t>0} t^{1/p}f^*(t), & q=\infty.
    \end{cases}
\end{equation*}
Then, we say $f$ is in the Lorentz space $L^{p,q}(X,\mu)$ if $\|f\|_{(p,q)}<\infty$.
\end{definition}

It is well-known that $L^{p,p}=L^p$ and that $L^{p,\infty}$ coincides with the weak $L^p$ space. For a more in-depth discussion of Lorentz spaces, we refer readers to \cite{G}. Throughout this note, we use the notations $a\vee b = \max(a,b)$ and $a\wedge b=\min(a,b)$. Our first result can be stated as follows.

\begin{theorem}\label{thm_main}
 Let $d_1,d_2>1$ and $\Delta$ be as in \eqref{eq_laplace}. The Riesz transform $\nabla \Delta^{-1/2}$ is bounded on $L^p(\Tilde{\mathbb{R}},d\mu)$ if and only if
 \begin{equation*}
     1<p< p_0:= d_* \vee d_*'\quad \text{or} \quad p=2,
 \end{equation*}
 where $d_*=d_1\wedge d_2$.

 In addition, $\nabla \Delta^{-1/2}$ is of restricted weak type $(p_0,p_0)$ i.e. bounded from $L^{p_0,1}\to L^{p_0,\infty}$.
\end{theorem}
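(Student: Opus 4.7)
Without loss of generality assume $d_1<d_2$, so that $d_*=d_1$ and $p_0=d_1\vee d_1'$. The plan is to write $\nabla\Delta^{-1/2}$ as an integral operator with kernel $K(x,y)$ and to analyze $K$ on each of the four regions of $\tilde{\mathbb{R}}\times\tilde{\mathbb{R}}$ determined by the signs of $x$ and $y$. For the kernel itself I would use the resolvent formula
\begin{equation*}
    \Delta^{-1/2}=\pi^{-1}\int_0^{\infty}(\Delta+\lambda^2)^{-1}\,d\lambda
\end{equation*}
(or an equivalent subordination from the heat semigroup). Since the problem is one-dimensional, on each end $(\Delta+\lambda^2)^{-1}$ can be written explicitly in terms of Bessel-type solutions of the weighted ODE $-(|r|^{d_i-1} u')'/|r|^{d_i-1}+\lambda^2 u=0$, matched at the junction points $\pm 1$. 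Applying $\nabla$ then produces an explicit expression for $K(x,y)$.

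The short-range part of $K$ (say, for $|x-y|\lesssim 1$) is a standard Calderón--Zygmund kernel on a locally doubling piece of $\tilde{\mathbb{R}}$, hence bounded on $L^p$ for every $1<p<\infty$. The long-range ``same-end'' contributions behave like the Riesz transform on the one-dimensional model with measure $|r|^{d_i-1}\,dr$; by Theorem \ref{thm_HS} these are $L^p$-bounded in the range $(1,d_i\vee d_i')$, so the $d_1$-end is the bottleneck and forces the outer cutoff at $p_0$. For the cross contributions (with $x$ and $y$ on opposite ends) the naive pointwise decay of $K$ is insufficient, and one must invoke harmonic annihilation: subtract from $\Delta^{-1/2}f$ a judiciously chosen harmonic correction, built from the constants together with the Green-type functions that are bounded on one end and grow on the other, so that the residual kernel gains enough weighted decay to be $L^p$-bounded in the claimed range. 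The asymmetry between the two ends then pins the cutoff exactly at $p_0$.

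For sharpness at $p\geq p_0$, $p\neq 2$, I would construct an approximate eigenfunction-type family $f_R$ supported on $[1,R]$ (the $d_2$-end), compute $\nabla\Delta^{-1/2} f_R$ from the explicit kernel, and estimate its $L^p$-norm on the $d_1$-end, showing $\|\nabla\Delta^{-1/2}f_R\|_p/\|f_R\|_p\to\infty$ as $R\to\infty$. The subrange $p\geq d_*'$ arising when $d_*<2$ is then handled by duality. For the endpoint statement I would refine the cross-kernel estimates at the critical exponent, extracting a sharp bound with a logarithmic correction, and then verify $\mu(\{|\nabla\Delta^{-1/2}\chi_E|>s\})\lesssim(\mu(E)/s)^{p_0}$ for every measurable $E$ of finite measure via a layer-cake calculation; Marcinkiewicz interpolation in the Lorentz category upgrades this to $L^{p_0,1}\to L^{p_0,\infty}$.

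The main obstacle will be the cross-kernel analysis via harmonic annihilation. Because $\tilde{\mathbb{R}}$ is neither doubling nor Poincaré, the standard Calderón--Zygmund machinery is unavailable, and the required integrability has to be extracted purely from explicit cancellation against harmonic-at-infinity modes. Identifying the correct correction and showing that the resulting residual kernel yields $L^p$ boundedness up to, but not beyond, exactly $p_0$, will require careful bookkeeping of the contributions from each end. A secondary delicate point is calibrating the counterexample in Step 3 so that it establishes strong unboundedness at $p_0$ while still being compatible with the restricted weak-type bound proved in Step 4.
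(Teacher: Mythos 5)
Your overall skeleton (explicit Bessel-type resolvent kernel, quadrant-by-quadrant analysis, a counterexample for sharpness, a Lorentz-space endpoint) matches the paper's, but three of your key steps would not go through as described. First, the same-end contributions do not reduce to Theorem~\ref{thm_HS}: the matching conditions at $\pm 1$ make the resolvent coefficients on each end depend on \emph{both} dimensions (e.g.\ for $1<d_1<2<d_2$ one has $|B(\lambda)|\lesssim\lambda^{2d_2-d_1-2}$ for small $\lambda$), so the same-end operator $I_1$ on the $d_2$-end is bounded for $1<p<d_2/(d_1-1)$, not for $1<p<d_2\vee d_2'$. The paper instead estimates each quadrant kernel explicitly and feeds the resulting homogeneous power-law bounds into Hardy--Hilbert/Schur-type lemmas (Lemmas~\ref{thmHLP} and~\ref{le_N}). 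Second, harmonic annihilation is the wrong tool for the cross terms of the \emph{forward} Riesz transform; in the paper it appears only in the reverse inequality. The cross kernels $II_1$ and $IV_1$ have explicit homogeneous upper bounds that are also \emph{lower} bounds (the coefficient $A(\lambda)$ is positive), so there is no cancellation to exploit; their naive pointwise decay is in fact sufficient for $p<p_0$, and $IV_1$ is precisely the piece that is genuinely unbounded for $p\ge p_0$, so no harmonic correction can extend its range.

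Third, your counterexample points the wrong way. Data on the $d_2$-end measured on the $d_1$-end probes $II_1$, which (as above) is bounded on a range strictly larger than $(1,p_0)$ and therefore cannot detect the failure at $p_0$. The paper tests $IV_1$ instead — data on the $d_*$-end, output on the other end — with $f(y)=|y|^{\beta-d_1}(1+\log|y|)^{-1}$, which lies in $L^{p}$ for all $p\ge p_0$ while its image is identically infinite. Your appeal to duality for the subrange $p\ge d_*'$ also fails: the adjoint of $\nabla\Delta^{-1/2}$ is $\Delta^{-1/2}\nabla^*$, not the Riesz transform, so unboundedness does not transfer between $p$ and $p'$ (this non-equivalence is a running theme of the paper). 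Finally, the restricted weak type statement is not an upgrade by Marcinkiewicz interpolation of a characteristic-function estimate — it \emph{is} the $L^{p_0,1}\to L^{p_0,\infty}$ bound, and the paper obtains it directly from the kernel bounds via Hardy--Littlewood/Lorentz duality (Lemmas~\ref{thmRW} and~\ref{le_R2}); the calibration issue you worry about between the counterexample and the endpoint bound disappears once the counterexample is placed on the correct quadrant.
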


As observed in the result above, the range of $L^p\textit{-}$boundedness of the Riesz transform depends only on the smaller “dimension” $d_*$. This outcome is consistent with earlier findings in \cite{HS, HS1D, HNS, H}, and it extends those results by accounting for all remaining possible dimensional configurations on each end.

We next turn to the study of a reverse Riesz inequality. Specifically, we aim to establish an inequality of the following form:
\begin{align*}
    \|\Delta^{1/2}f\|_p \le C \|\nabla f\|_p,
\end{align*}
for some range of $p\in (1,\infty)$. It is well-known that on a complete Riemannian manifold, the $L^p\textit{-}$boundedness of the Riesz transform directly implies the $L^{p'}\textit{-}$boundedness of the reverse Riesz transform (duality property), but the converse does not generally hold; see \cite{CD}. In contrast to the classical Euclidean setting, there exist certain classes of manifolds for which the Riesz transform is bounded on only a finite subset of $(1,\infty)$; see \cite{HS,HNS,L,GH1,GH2}. Consequently, by duality, one may naturally conjecture that under the framework of \cite{HS,HNS,L}, the range of boundedness for the reverse inequality would also exclude a portion of $(1,\infty)$. However, recent works \cite{H2} demonstrate that this presumed dual equivalence between the Riesz and reverse Riesz transforms is false. In fact, earlier literature already contains strong indications of such a discrepancy. For example, \cite{AC} shows that when a doubling condition and an $L^q\textit{-}$Poincaré inequality both hold, the reverse Riesz transform is $L^p\textit{-}$bounded for all $p>q$. Under the same assumptions, however, one can only prove that the Riesz transform is bounded for $1<p<2+\epsilon$ for some $\epsilon>0$. 

Define subspace
\begin{align*}
    S_0(\Tilde{\mathbb{R}}) = \left\{f\in C_c^\infty(\Tilde{\mathbb{R}}); f(-1) = f(1) = 0\right\}.
\end{align*}
In the second part of this note, we adapt the method introduced in \cite{H2} to complete the analysis of the reverse Riesz transform on the broken line, building on the following partial results in \cite{H2}:

\begin{theorem}\cite[Theorem~5.1]{H2}
Let $d_1=d_2>1$and $\Delta$ be as in \eqref{eq_laplace} and $\Delta$ be as in \eqref{eq_laplace}. The following reverse Riesz inequality holds
\begin{align*}
    \|\Delta^{1/2}f\|_p \le C \|\nabla f\|_p,\quad \forall f\in S_0(\Tilde{\mathbb{R}})
\end{align*}
for all
\begin{align*}
    p\in \begin{cases}
        (1,d) \cup (d,\infty), & 1<d<2,\\
        (1,\infty), & d\ge 2.
    \end{cases}
\end{align*}
\end{theorem}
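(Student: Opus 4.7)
The plan is to split the range of $p$ into two regimes: a duality regime for large $p$ and a direct regime for small $p$, treated by different methods.

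For the large-$p$ range, I would invoke Theorem~\ref{thm_HS}, which yields the $L^{p'}$-boundedness of $\nabla\Delta^{-1/2}$ on $(1,d)$ when $d>2$, on $(1,2]$ when $d=2$, and on $(1,d')$ when $1<d<2$. Taking formal adjoints with respect to $d\mu$ (which is legitimate on the dense subspace $S_0(\Tilde{\mathbb{R}})$), the operator $\Delta^{-1/2}\nabla^*$ is bounded on $L^p$ in the conjugate range. For $f\in S_0(\Tilde{\mathbb{R}})$ the identity $\nabla^*\nabla f=\Delta f$ together with the spectral theorem gives
\begin{equation*}
\|\Delta^{1/2}f\|_p = \|\Delta^{-1/2}\nabla^*\nabla f\|_p \lesssim \|\nabla f\|_p,
\end{equation*}
proving the reverse inequality on $(d',\infty)$ for $d>2$, on $[2,\infty)$ for $d=2$, and on $(d,\infty)$ for $1<d<2$.

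The remaining ranges $(1,d']$ when $d>2$, $(1,2)$ when $d=2$, and $(1,d)$ when $1<d<2$ must be handled directly. My starting point would be the Bochner subordination formula
\begin{equation*}
\Delta^{1/2}f(x) = \frac{1}{\sqrt{\pi}}\int_0^\infty \bigl(f(x)-e^{-t\Delta}f(x)\bigr)\,\frac{dt}{t^{3/2}},
\end{equation*}
combined with the known heat-kernel estimates on $\Tilde{\mathbb{R}}$ (essentially Gaussian on each end, with an explicit crossing term linking the two ends). Extracting a kernel representation of the operator $T:=\Delta^{1/2}\circ\nabla^{-1}$ acting on $g=\nabla f$ and decomposing $T=T_{\mathrm{loc}}+T_{\mathrm{far}}$, the local piece is a genuine Calder\'on--Zygmund operator, hence bounded on $L^p$ for every $p\in(1,\infty)$, while the far piece carries all the potentially non-integrable tails.

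The main obstacle, and the heart of the \emph{harmonic annihilation} method, is controlling $T_{\mathrm{far}}$. The leading slow-decay term of its kernel can be factored as a product of a function of $x$ and a harmonic function of $y$, and since $f\in S_0$ satisfies $f(\pm1)=0$ while $g=\nabla f$ is compactly supported, an integration by parts against this harmonic factor produces no boundary contribution and replaces the offending term by a remainder whose kernel decays fast enough to satisfy a Schur test on each end. This yields the $L^p$ bound in the asserted ranges; the exclusion $p=d$ when $1<d<2$ reflects a logarithmic resonance between the remainder kernel and the measure $d\mu$ at that exponent, which cannot be removed by any choice of harmonic correction. I expect the delicate point to be pinning down the precise harmonic function -- a linear combination of $1$ and $r^{2-d}$ (or $\log|r|$ when $d=2$), with independent coefficients on the two ends so that the cancellation is simultaneous -- and verifying that the resulting remainder is Schur-summable uniformly across the junction $\{\pm1\}$.
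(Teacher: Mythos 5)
The first half of your argument (dualizing Theorem~\ref{thm_HS} to get the reverse inequality for $p$ in the conjugate range) is correct and is exactly the reduction the paper makes: after this step only $p\in(1,d)$ for $1<d<2$, $p\in(1,2)$ for $d=2$, and $p\in(1,d']$ for $d>2$ remain. The problem is the second half, which is where all the work lies, and there you have a genuine gap rather than a proof. You never write down the kernel of your operator $T_{\mathrm{far}}$, never specify the harmonic factor beyond naming $1$ and $r^{2-d}$, never compute the remainder after the integration by parts, and never verify any Schur-test bound. More importantly, the logical structure is off: a Schur test for an operator acting on $g=\nabla f$ alone cannot be the whole story, because the sharp restriction $p<d$ (for $1<d<2$) has to enter somewhere quantitatively, and an unweighted Schur/CZ argument on each end would either work for all $p\in(1,\infty)$ or fail at an endpoint for reasons you have not identified; attributing the exclusion of $p=d$ to an unspecified ``logarithmic resonance'' is not a proof. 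In the paper's argument (following the harmonic annihilation method of \cite{H2}), the constraint enters through the weighted Hardy inequality $\|f/|\cdot|\,\|_p\lesssim\|f'\|_p$, valid precisely for $1\le p<d$ (Lemma~\ref{le_Hardy}): after integrating by parts in the bilinear form and using the Bessel equation $k_i''+\frac{d_i-1}{r}k_i'=\lambda^2 k_i$ to trade the derivative on the kernel for a factor $\lambda^2 k_i$, the low-energy contribution is estimated as $\sum_{i,j}\|f/|\cdot|\,\|_p\,\|\mathcal{T}_{ij}g\|_{p'}$, with explicit $L^{p'}$ bounds on the auxiliary operators $\mathcal{T}_{ij}$ (Lemma~\ref{le_key}). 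Your sketch has no substitute for this Hardy step, so even granting your unproven kernel and annihilation claims, the asserted range would not follow.

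There are also secondary issues you would need to repair: $T:=\Delta^{1/2}\circ\nabla^{-1}$ is not a well-defined operator on this space (the paper works instead with the bilinear form $\langle\Delta^{1/2}f,g\rangle=\langle\nabla f,\nabla(\Delta+\lambda^2)^{-1}g\rangle$ integrated in $\lambda$, which avoids inverting $\nabla$); the claim that the ``local piece is a genuine Calder\'on--Zygmund operator'' requires kernel and regularity estimates you have not stated (the paper instead disposes of the ``$kl$'' and high-energy pieces by citing \cite[Theorem 5.1]{HS1D} and Lemma~\ref{le_TH}); and the heat-kernel bounds with the ``crossing term'' that your subordination formula relies on are neither derived nor cited. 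In short, the duality regime is fine, but the direct regime is a plan whose central estimates are missing, and the missing ingredient is precisely the weighted Hardy inequality (or an equivalent mechanism) that produces the restriction $p<d$.
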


Our next result can be formulated as follows.

\begin{theorem}\label{thm_RR_main}
Let $d_1,d_2>1$ and $\Delta$ be as in \eqref{eq_laplace}. The following reverse Riesz inequality holds
\begin{align*}
    \|\Delta^{1/2}f\|_p \le C \|\nabla f\|_p,\quad \forall f\in S_0(\Tilde{\mathbb{R}})
\end{align*}
for all 
\begin{align}\label{range}
    p\in \begin{cases}
        (1,d_*) \cup (d_*,\infty), & \textit{if}\quad 1<d_*<2,\\
        (1,\infty), & \textit{if}\quad d_*\ge 2,
    \end{cases}
\end{align}
where $d_* = d_1 \wedge d_2$.
\end{theorem}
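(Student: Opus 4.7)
The plan is to prove the reverse Riesz inequality by splitting the range of $p$ into three regimes, adapting the framework of \cite{H2} to the asymmetric configuration $d_1\ne d_2$; assume without loss of generality that $d_1 = d_* \le d_2$. The case $p=2$ follows at once from the spectral identity $\|\Delta^{1/2} f\|_2^2 = \langle \Delta f, f\rangle = \|\nabla f\|_2^2$. For large $p$ one argues by duality from Theorem \ref{thm_main}: with $p_0 = d_*\vee d_*'$, the $L^{p'}$ boundedness of $\nabla \Delta^{-1/2}$ for $1<p'<p_0$ transfers, via $L^2$-adjoints, to $L^p$ boundedness of $\Delta^{-1/2}\nabla^*$ for all $p\in(p_0',\infty)$, where $p_0' = d_*\wedge d_*'$. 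Since $\Delta^{1/2} f = \Delta^{-1/2}\nabla^*(\nabla f)$, this immediately yields the reverse Riesz inequality for every $p\in(p_0',\infty)$, covering in particular all $p\ge 2$.

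The remaining range is $p\in(1,p_0']$ when $d_*\ge 2$ and $p\in(1,d_*)$ when $1<d_*<2$, and here I would invoke the harmonic annihilation technique. The pivotal observation is that for any $f\in S_0(\Tilde{\mathbb{R}})$, the boundary conditions $f(\pm 1)=0$ together with compact support imply the cancellation identities
\begin{equation*}
\int_{-\infty}^{-1}\nabla f(r)\,dr = 0 \quad\text{and}\quad \int_{1}^{\infty}\nabla f(r)\,dr = 0.
\end{equation*}
Using the Riesz kernel estimates developed earlier in this paper during the proof of Theorem \ref{thm_main}, the Schwartz kernel $K(r,s)$ of $\Delta^{-1/2}\nabla^*$ can be decomposed as $K = K_{\mathrm{harm}} + K_{\mathrm{good}}$, where $K_{\mathrm{harm}}$ has the separated form $\phi(r)\,\chi(s)$ with $\phi$ a bounded harmonic function for $\Delta$ (reflecting the distinct volume growths at the two ends) and $\chi$ localized near the junction $|s|=1$, while $K_{\mathrm{good}}$ satisfies Calderón--Zygmund type bounds strong enough to act boundedly on $L^p$ throughout the remaining range. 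The cancellation identities annihilate $\int K_{\mathrm{harm}}(r,s)\,\nabla f(s)\,d\mu(s)$, reducing the inequality to a $K_{\mathrm{good}}$ estimate handled by standard weighted singular-integral methods on each end.

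The principal obstacle is performing the kernel decomposition correctly in the asymmetric setting: unlike in \cite{H2}, where $d_1=d_2$ makes the obstructing harmonic function essentially constant, here $d_1<d_2$ forces $\phi$ to have genuinely different asymptotics on the two ends, and the cross-end interaction near the junction must be tracked with care. A second delicate point is the exclusion of $p=d_*$ when $d_*<2$: this is not a technical artifact but reflects that the $L^{d_*}$ mass of the obstructing harmonic function becomes critical at that exponent, so harmonic annihilation cannot close there, mirroring the sharp failure of the Riesz transform at the dual exponent $p=d_*'$ established in Theorem \ref{thm_main}.
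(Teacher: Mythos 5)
Your reductions at the easy end are fine and agree with the paper: $p=2$ is trivial, and the duality bookkeeping correctly leaves only $p\in(1,d_*)$ when $1<d_*<2$ and $p\in(1,d_*']$ (resp. $(1,2)$) when $d_*>2$ (resp. $d_*=2$). The problem is that the remaining range, which is the entire content of the theorem, is only asserted, and the mechanism you describe is internally inconsistent. The cancellation identities $\int_1^\infty \nabla f\,dr=\int_{-\infty}^{-1}\nabla f\,dr=0$ say that $\nabla f$ integrates to zero against the \emph{constant} function with respect to Lebesgue measure over an \emph{entire} end; they annihilate a separated piece $\phi(r)\chi(s)$ acting against $d\mu(s)=|s|^{d_j-1}ds$ only if $\chi(s)\simeq|s|^{1-d_j}$ on the whole end, which is the opposite of ``$\chi$ localized near the junction $|s|=1$.'' As written, $\int K_{\mathrm{harm}}(r,s)\nabla f(s)\,d\mu(s)$ does not vanish, so no reduction has been achieved. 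Moreover, even with the correct choice of separated piece, the assertion that the remainder satisfies ``Calder\'on--Zygmund type bounds'' disposable by ``standard weighted singular-integral methods'' is precisely what is unavailable here: for $d_1\neq d_2$ the space $(\Tilde{\mathbb{R}},d\mu)$ is non-doubling (a point the paper emphasizes), and proving the boundedness of the non-annihilated part is the bulk of the work, not a routine step. Finally, your closing claim that $p=d_*$ is excluded because an obstructing harmonic function becomes $L^{d_*}$-critical is not established anywhere (the paper proves no negative result for the reverse inequality at $p=d_*$).

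For comparison, the paper never decomposes the kernel of $\Delta^{-1/2}\nabla^*$. It pairs $\Delta^{1/2}f$ with $g$, writes $\Delta^{1/2}=\int_0^\infty\Delta(\Delta+\lambda^2)^{-1}d\lambda$, moves one derivative onto $f$ via $\Delta=\nabla^*\nabla$, and disposes of the $kl$ part and the high-energy $kk$ part by the $L^q$ bounds already obtained in the proof of Theorem~\ref{thm_main}. For the low-energy $kk$ part it integrates by parts in $x$: the boundary terms vanish exactly because $f(\pm1)=0$ (this is the harmonic annihilation in this model), and the Bessel equation \eqref{eq_ODE} converts the two derivatives falling on $k_i(\lambda|x|)$ into a factor $\lambda^2$, which tames the $\lambda$-integral near $0$. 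The resulting bilinear form is then bounded by H\"older's inequality combined with two explicit ingredients: the $L^{p'}$ bounds for the operators $\mathcal{T}_{ij}$ (Lemma~\ref{le_key}, proved case by case from the Bessel asymptotics, including the logarithmic borderline cases), and the Hardy inequality of Lemma~\ref{le_Hardy}. It is Hardy's inequality, valid only for $1\le p<d_*$, that produces the range restriction and the excluded exponent. To turn your outline into a proof you would need to (i) identify the correct separated low-energy piece, whose $s$-profile is $\simeq|s|^{1-d_j}$ over the whole end rather than junction-localized, and (ii) prove $L^p$ bounds for everything left over by explicit weighted estimates in the spirit of Lemma~\ref{le_key}; neither step is supplied, so the argument as it stands has a genuine gap.
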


\section{Riesz Transform on Broken Line}\label{sec_2}

Throughout the paper, we use notations $A\lesssim B$, $A\gtrsim B$ and $A\simeq B$ to denote $A\le cB$, $A\ge cB$ and $cB \le A \le c^{-1} B$ respectively for some constant $c>0$. 
\medskip

Recall the broken line $\Tilde{\mathbb{R}} = (-\infty,-1]\cup [1,\infty)$ and the measure defined in \eqref{eq_measure}. We say that $f\in C^1(\Tilde{\mathbb{R}})$ if $f$ is continuous on $(-\infty, -1]\cup[1,\infty)$, with the additional conditions $f(1) = f(-1)$ and $f'(-1)=f'(1)$. We then define the Sobolev space 
\begin{equation*}
    H^1(\Tilde{\mathbb{R}},d\mu) = \left\{f:\Tilde{\mathbb{R}} \mapsto \mathbb{C}: \int_{\Tilde{\mathbb{R}}}|f|^2+|f'|^2d\mu <\infty \right\}.
\end{equation*}
For $f,g\in H^1\cap C^0$, we define quadratic form:
\begin{equation}\label{eq_quadratic form}
    \mathcal{Q}(f,g) = \int_{-\infty}^{-1}f'(r)g'(r)|r|^{d_1-1}dr+ \int_1^\infty f'(r)g'(r)r^{d_2-1}dr.
\end{equation}
By Friedrichs’ extension, there is a unique self-adjoint operator $\Delta$ associated with \eqref{eq_quadratic form}, which takes the form
\begin{equation*}
    \Delta f = 
    \begin{cases}
    -f{''}(r) - \frac{d_1-1}{r}f'(r), & r\le -1\\
    -f{''}(r) - \frac{d_2-1}{r}f'(r), & r\ge 1.
    \end{cases}
\end{equation*}
We say $\Delta$ is the Laplacian on $\Tilde{\mathbb{R}}$, with domain $H^2\cap C^1$, where
\begin{equation*}
    H^2(\Tilde{\mathbb{R}}) = \left\{ f\in H^1(\Tilde{\mathbb{R}}): \Delta f\in L^2(\Tilde{\mathbb{R}},d\mu)\right\}.
\end{equation*}
Let $\nabla$ be the usual derivative operator i.e. $\nabla f = f'$. The Riesz transform is then defined by $\nabla \Delta^{-1/2}$. Following the approach from \cite{HS1D}, we stick to a resolvent based formula
\begin{gather*}
    \nabla \Delta^{-1/2} = \frac{2}{\pi} \int_0^\infty \nabla(\Delta+\lambda^2)^{-1}d\lambda.
\end{gather*}
From now on, we omit the constant $2/\pi$ in subsequent expressions since it plays no role in the argument.

Consider ordinary differential equation $(i=1,2)$:
\begin{equation}\label{eq_ODE}
    f^{''} + \frac{d_i-1}{r}f' = \lambda^2 f.
\end{equation}
By \cite{HS1D}, the general solution of \eqref{eq_ODE} is given by a linear combination of the functions $l_i(\lambda r)$ and $k_i(\lambda r)$, defined by 
\begin{equation*}
    l_i(r) = r^{1-d_i/2}I_{d_i/2-1}(r), \quad k_i(r) = r^{1-d_i/2}K_{|d_i/2-1|}(r),
\end{equation*}
where $I$ and $K$ denote the modified Bessel functions, see \cite[p.374]{AS}. Following \cite[Chapter 2]{N,nix2019resolvent} (also see \cite[Section 4]{HS1D}), one can compute the resolvent kernel explicitly:

\textbf{For $y\ge 1$}
\begin{equation}\label{eq_y>1}
    K_{(\Delta+\lambda^2)^{-1}}(x,y) = 
    \begin{cases}
    A(\lambda) k_1(\lambda|x|) k_2(\lambda|y|), & x\le -1,\\
    B(\lambda) k_2(\lambda|y|) k_2(\lambda|x|) + v_2 \lambda^{d_2-2}k_2(\lambda|y|)l_2(\lambda|x|), & 1\le x\le y,\\
    B(\lambda) k_2(\lambda|y|) k_2(\lambda|x|) + v_2 \lambda^{d_2-2}l_2(\lambda|y|) k_2(\lambda|x|), & x\ge y.
    \end{cases}
\end{equation}

\textbf{For $y\le -1$}
\begin{equation}\label{eq_y<1}
    K_{(\Delta+\lambda^2)^{-1}}(x,y) = 
    \begin{cases}
    A(\lambda) k_2(\lambda|x|) k_1(\lambda|y|), & x\ge 1,\\
    C(\lambda) k_1(\lambda|y|) k_1(\lambda|x|) + v_1 \lambda^{d_1-2}k_1(\lambda|y|)l_1(\lambda|x|), & y\le x\le -1,\\
    C(\lambda) k_1(\lambda|y|) k_1(\lambda|x|) + v_1 \lambda^{d_1-2}l_1(\lambda|y|) k_1(\lambda|x|), & x\le y,
    \end{cases}
\end{equation}
where $v_i$ are some constants only depending on $d_i$ ($i=1,2$), and
\begin{gather*}
    A(\lambda) = \frac{-1}{\lambda [k_1(\lambda) k_2(\lambda)]'},\quad B(\lambda) = \frac{-v_2 \lambda^{d_2-2}[k_1(\lambda) l_2(\lambda)]'}{[k_1(\lambda) k_2(\lambda)]'},\quad
    C(\lambda) = \frac{-v_1 \lambda^{d_1-2}[k_2(\lambda) l_1(\lambda)]'}{[k_1(\lambda) k_2(\lambda)]'}.
\end{gather*}
We classify the kernel described in \eqref{eq_y>1} and \eqref{eq_y<1} into two groups:

$\bullet$ The \emph{"kk"} part, denoted by $(\Delta+\lambda^2)^{-1}_{kk}$, which consists of all terms of the form $k(\cdot)k(\cdot)$.

$\bullet$ The \emph{"kl"} part, denoted by $(\Delta+\lambda^2)^{-1}_{kl}$, which comprises the remaining terms, i.e. those of the form $k(\cdot)l(\cdot)$. Apparently
\begin{equation*}
(\Delta+\lambda^2)^{-1} = (\Delta+\lambda^2)^{-1}_{kk} + (\Delta+\lambda^2)^{-1}_{kl}. 
\end{equation*}

Next, from \cite[p.,374]{AS}, we recall the following asymptotic behaviors for $k_i$ and $l_i$.

\textbf{If $1<d_i<2$}:
\begin{align*}
    &k_i(r)\simeq 
    \begin{cases}
    1, & r<1,\\
    r^{(1-d_i)/2}e^{-r}, & r\ge 1,
    \end{cases}
    \quad
    &&l_i(r)\simeq 
    \begin{cases}
    1, & r<1,\\
    r^{(1-d_i)/2}e^r, & r\ge 1,
    \end{cases}
    \\
    &k_i'(r)\simeq 
    \begin{cases}
    -r^{1-d_i}, & r<1,\\
    -r^{(1-d_i)/2}e^{-r}, & r\ge 1,
    \end{cases}
    \quad
    &&l_i'(r)\simeq 
    \begin{cases}
    r, & r<1,\\
    r^{(1-d_i)/2}e^r, & r\ge 1.
    \end{cases}
\end{align*}

\textbf{If $d_i=2$}:
\begin{align*}
    &k_i(r)\simeq 
    \begin{cases}
    1-\log \lambda, & r<1,\\
    r^{-1/2}e^{-r}, & r\ge 1,
    \end{cases}
    \quad
    &&l_i(r)\simeq 
    \begin{cases}
    1, & r<1,\\
    r^{-1/2}e^r, & r\ge 1,
    \end{cases}
    \\
    &k_i'(r)\simeq 
    \begin{cases}
    -r^{-1}, & r<1,\\
    -r^{-1/2}e^{-r}, & r\ge 1,
    \end{cases}
    \quad
    &&l_i'(r)\simeq 
    \begin{cases}
    r, & r<1,\\
    r^{-1/2}e^r, & r\ge 1.
    \end{cases}
\end{align*}

\textbf{If $d_i>2$}:
\begin{align*}
    &k_i(r)\simeq 
    \begin{cases}
    r^{2-d_i}, & r<1,\\
    r^{(1-d_i)/2}e^{-r}, & r\ge 1,
    \end{cases}
    \quad
    &&l_i(r)\simeq 
    \begin{cases}
    1, & r<1,\\
    r^{(1-d_i)/2}e^r, & r\ge 1,
    \end{cases}
    \\
    &k_i'(r)\simeq 
    \begin{cases}
    -r^{1-d_i}, & r<1,\\
    -r^{(1-d_i)/2}e^{-r}, & r\ge 1,
    \end{cases}
    \quad
    &&l_i'(r)\simeq 
    \begin{cases}
    r, & r<1,\\
    r^{(1-d_i)/2}e^r, & r\ge 1.
    \end{cases}
\end{align*}

Using the asymptotic formulas above, one can directly estimate the coefficients $A,B$, and $C$ that appear in \eqref{eq_y<1} and \eqref{eq_y>1}.

\textbf{If $1<d_1<d_2<2$}.
\begin{gather*}
    A\simeq 
    \begin{cases}
    \lambda^{d_2-2}, & \lambda<1\\
    \lambda^{\frac{d_1+d_2}{2}-2}e^{2\lambda}, & \lambda \ge 1
    \end{cases}\quad
    |B|\lesssim 
    \begin{cases}
    \lambda^{2d_2-d_1-2}, & \lambda<1\\
    \lambda^{d_2-2}e^{2\lambda}, & \lambda \ge 1
    \end{cases}\quad
    |C|\lesssim
    \begin{cases}
    \lambda^{d_1-2}, & \lambda<1\\
    \lambda^{d_1-2}e^{2\lambda}, & \lambda \ge 1
    \end{cases}.
\end{gather*}\

\textbf{If $1<d_1<d_2=2$}. 
\begin{gather*}
    A\simeq \begin{cases}
        1, & \lambda<1\\
        \lambda^{\frac{d_1}{2}-1}e^{2\lambda}, & \lambda\ge 1
    \end{cases}\quad
    |B|\lesssim \begin{cases}
        \lambda^{2-d_1}, & \lambda<1\\
        e^{2\lambda}, & \lambda \ge 1
    \end{cases}\quad
    |C|\lesssim \begin{cases}
        \lambda^{d_1-2}, & \lambda<1\\
        \lambda^{d_1-2}e^{2\lambda}, & \lambda \ge 1
    \end{cases} .
\end{gather*}

\textbf{If $1<d_1<2<d_2$}.
\begin{gather*}
    A\simeq 
    \begin{cases}
    \lambda^{d_2-2}, & \lambda<1\\
    \lambda^{\frac{d_1+d_2}{2}-2}e^{2\lambda}, & \lambda \ge 1
    \end{cases}\quad
    |B|\lesssim 
    \begin{cases}
    \lambda^{2d_2-d_1-2}, & \lambda<1\\
    \lambda^{d_2-2}e^{2\lambda}, & \lambda \ge 1
    \end{cases}\quad
    |C|\lesssim
    \begin{cases}
    \lambda^{d_1-2}, & \lambda<1\\
    \lambda^{d_1-2}e^{2\lambda}, & \lambda \ge 1.
    \end{cases}
\end{gather*}

\textbf{If $2<d_1<d_2$}. 
\begin{gather*}
    A\simeq \begin{cases}
        \lambda^{d_1+d_2-4}, & \lambda <1\\
        \lambda^{\frac{d_1+d_2}{2}-2}e^{2\lambda}, & \lambda \ge 1
    \end{cases}\quad
    |B|\lesssim \begin{cases}
        \lambda^{2d_2-4}, & \lambda<1\\
        \lambda^{d_2-2}e^{2\lambda}, & \lambda \ge 1
    \end{cases}\quad
    |C|\lesssim \begin{cases}
        \lambda^{2d_1-4}, & \lambda<1\\
        \lambda^{d_1-2}e^{2\lambda}, & \lambda \ge 1.
    \end{cases}
\end{gather*}

\begin{remark}
Note that, by \cite[Lemma 3.1]{HS1D} (although different notation is used there), the coefficient $A$ is always positive. The signs of $B$ and $C$, however, are not determined. Here, we only provide upper bounds for $B$ and $C$. More precise estimates can be obtained via the series expansion of the modified Bessel functions, revealing that $B$ and $C$ can, in fact, assume a fixed sign when $\lambda$ is sufficiently small; see \cite{N,nix2019resolvent} for details.
\end{remark}

\subsection{Kernel Estimates}
Recall that by \eqref{eq_y>1}, \eqref{eq_y<1}, the Riesz transform can be written as
\begin{equation}\label{eq_riesz}
    \nabla \Delta^{-1/2}= \int_0^\infty \nabla(\Delta+\lambda^2)^{-1}_{kk}d\lambda + \int_0^\infty \nabla(\Delta+\lambda^2)^{-1}_{kl}d\lambda:= R_{kk}+R_{kl}.
\end{equation}
In this subsection, we give estimates for the kernel of $R_{kk}$. We define "quadrants"
\begin{align*}
    &Q_1 = \{x,y\ge 1\} \quad &&Q_2=\{x\le -1, y\ge 1\}\\
    &Q_3 = \{x,y\le -1\} \quad &&Q_4=\{x\ge 1, y\le -1\}.
\end{align*}
From \eqref{eq_y>1} and \eqref{eq_y<1}, we compute $R_{kk}(x,y)$ directly

\textbf{For} $(x,y)\in Q_1$,
\begin{equation}\label{Q1}
R_{kk}(x,y) = \int_0^{1/(|x|\wedge |y|)}+\int_{1/(|x|\wedge |y|)}^\infty \lambda B(\lambda) k_2(\lambda|y|) k_2'(\lambda|x|)d\lambda:= I_1 + I_2.
\end{equation}

\textbf{For} $(x,y)\in Q_2$,
\begin{equation}\label{Q2}
R_{kk}(x,y)=\int_0^{1/(|x|\wedge |y|)}+\int_{1/(|x|\wedge |y|)}^\infty \lambda A(\lambda) k_1'(\lambda|x|) k_2(\lambda|y|)d\lambda:=II_1+II_2.
\end{equation}

\textbf{For} $(x,y)\in Q_3$,
\begin{equation}\label{Q3}
R_{kk}(x,y)=\int_0^{1/(|x|\wedge |y|)}+\int_{1/(|x|\wedge |y|)}^\infty \lambda C(\lambda) k_1(\lambda|y|) k_1'(\lambda|x|)d\lambda:=III_1+III_2.
\end{equation}

\textbf{For} $(x,y)\in Q_4$,
\begin{equation}\label{Q4}
R_{kk}(x,y)=\int_0^{1/(|x|\wedge |y|)}+\int_{1/(|x|\wedge |y|)}^\infty \lambda A(\lambda) k_2'(\lambda|x|) k_1(\lambda|y|)d\lambda:=IV_1+IV_2.
\end{equation}
Set $T_L = I_1+II_1+III_1+IV_1$ and $T_H=I_2+II_2+III_2+IV_2$. Then obviously, 
\begin{equation}\label{riesz decomposition}
    \nabla \Delta^{-1/2} = T_L+T_H+R_{kl},
\end{equation}
and we estimate the kernel of $T_L$ and $T_H$ separately. 

For the low energy component $T_L$, we introduce a function $F$, which may represent $A$, $B$, or $C$, depending on the quadrant under consideration. Relying on the asymptotics derived in Section~\ref{sec_2}, we assume
$|F(\lambda)|\lesssim \lambda^{\gamma_1}$ if $\lambda \le 1$ and $|F(\lambda)|\lesssim \lambda^{\gamma_2} e^{2\lambda}$ if $\lambda \ge 1$ where $\gamma_1,\gamma_2 \in \mathbb{R}$. It is the enough to estimate the following integral:
\begin{equation}\label{eq_low}
    \int_0^{1/(|x|\wedge |y|)} \lambda F(\lambda) k'_{i}(\lambda|x|) k_j(\lambda|y|) d\lambda \quad i,j\in \{1,2\}.
\end{equation}
Now, if $|x|\ge |y|$ (i.e. $1/(|x|\wedge |y|) = |y|^{-1}$), \eqref{eq_low} can be split into $\int_0^{|x|^{-1}} + \int_{|x|^{-1}}^{|y|^{-1}}$ and the first integral is bounded by
\begin{gather}
    \int_0^{|x|^{-1}} \lambda^{\gamma_1+1} (\lambda |x|)^{1-d_i} (\lambda|y|)^{(2-d_j)\wedge 0} d\lambda = |x|^{1-d_i}|y|^{(2-d_j)\wedge 0} \int_0^{|x|^{-1}} \lambda^{\gamma_1+2-d_i+(2-d_j)\wedge 0} d\lambda\\
    = \begin{cases}
        |x|^{1-d_i} \int_0^{|x|^{-1}}\lambda^{\gamma_1+2-d_i}d\lambda = |x|^{-\gamma_1-2}, & \textit{if}\quad j=1,\\ \nonumber
        |x|^{1-d_i} |y|^{2-d_j} \int_0^{|x|^{-1}}\lambda^{\gamma_1+4-d_i-d_j} d\lambda = |x|^{-\gamma_1+d_j-4}|y|^{2-d_j}, & \textit{if}\quad j=2,
    \end{cases}
\end{gather}
where we use the small variable asymptotics for both $k'_i(\lambda|x|)$ and $k_j(\lambda|y|)$.

Next, the second integral can be estimated by
\begin{gather}
|x|^{\frac{1-d_i}{2}} |y|^{(2-d_j)\wedge 0} \int_{|x|^{-1}}^{|y|^{-1}} \lambda^{\frac{2\gamma_1+3-d_i}{2}+(2-d_j)\wedge 0} e^{-\lambda|x|} d\lambda\\ \nonumber
    \lesssim \begin{cases}
        |x|^{-\gamma_1-2}, & \textit{if}\quad j=1,\\
        |x|^{-\gamma_1-4+d_j}|y|^{2-d_j}, & \textit{if}\quad j=2,
    \end{cases}
\end{gather}
where we use small variable asymptotics for $k_j(\lambda|y|)$ and large variable asymptotics for $k'_i(\lambda|x|)$.

While if $|x|\le |y|$ (i.e. $1/(|x|\wedge |y|)=|x|^{-1}$), we break up integral $\int_0^{|x|^{-1}}$ into $\int_0^{|y|^{-1}} + \int_{|y|^{-1}}^{|x|^{-1}}$. We then use small variable estimates for both $k'_i(\lambda|x|)$ and $k_j(\lambda|y|)$ in the first integral and the second we apply large variable asymptotics for $k_j(\lambda|y|)$ and small variable asymptotics for $k'_i(\lambda|x|)$. The first integral is then bounded by
\begin{gather}
    \int_0^{|y|^{-1}} \lambda^{\gamma_1+1} (\lambda |x|)^{1-d_i} (\lambda|y|)^{(2-d_j)\wedge 0} d\lambda = |x|^{1-d_i}|y|^{-\gamma_1-3+d_i} \quad  j=1,2.
\end{gather}
The second integral can be estimated by
\begin{gather}
    \int_{|y|^{-1}}^{|x|^{-1}}(\lambda|x|)^{1-d_i}(\lambda|y|)^{\frac{1-d_j}{2}}e^{-\lambda|y|} d\lambda \lesssim |x|^{1-d_i}|y|^{-\gamma_1-3+d_i} \quad  j=1,2.
\end{gather}
In summary, the low energy part has upper bound
\begin{equation}
    \begin{cases}
        \begin{cases}
            |x|^{-\gamma_1-2}, & j=1,\\
            |x|^{-\gamma_1-4+d_j}|y|^{2-d_j}, & j=2.
        \end{cases}, & \textit{if}\quad |x|\ge |y|,\\
        |x|^{1-d_i} |y|^{-\gamma_1-3+d_i}, & \textit{if}\quad |x|\le |y|.
    \end{cases}
\end{equation}

\begin{remark}
We note that the above estimate applies in all cases except when $1<d_1<d_2=2$, because $k_2$ exhibits logarithmic behavior for small arguments. Nonetheless, we assert that even in this scenario, the same method can still be applied. In particular, we control the logarithmic term by bounding it with $C_\epsilon (x/y)^\epsilon$ for any $\epsilon>0$.
\end{remark}

For readers convinence, we summarize estimates for each quadrant into tables in Appendix~\ref{appendix}.

Next, we turn to the high energy terms: $I_2,II_2,III_2,IV_2$. In this setting, it suffices to focus on the integral:
\begin{equation}\label{eq_high}
    \int_{1/(|x|\wedge |y|)}^\infty \lambda F(\lambda) k'_{i}(\lambda|x|) k_j(\lambda|y|) d\lambda \quad i,j\in \{1,2\}.
\end{equation}
We observe that for all $\lambda>1/(|x|\wedge |y|)$, $|F(\lambda)|\lesssim \lambda^{\gamma_2} e^{2\lambda}$. So, by using large asymptotics for both $k_i'(\lambda|x|)$ and $k_j(\lambda|y|)$, \eqref{eq_high} is bounded by
\begin{gather}\label{eq_hh}
    |x|^{\frac{1-d_i}{2}}|y|^{\frac{1-d_j}{2}} \int_{1/(|x|\wedge |y|)}^\infty \lambda^{\gamma_2+2-\frac{d_i+d_j}{2}} e^{-\lambda(|x|+|y|-2)}d\lambda.
\end{gather}
Now, if $\gamma_2+2-\frac{d_i+d_j}{2}\ge 0$, we obsorbe the term $\lambda^{\gamma_2+2-\frac{d_i+d_j}{2}}$ into the exponential term and bound \eqref{eq_hh} by
\begin{equation}
    |x|^{\frac{1-d_i}{2}}|y|^{\frac{1-d_j}{2}} \int_{1/(|x|\wedge |y|)}^\infty e^{-c\lambda(|x|+|y|-2)}d\lambda \lesssim |x|^{\frac{1-d_i}{2}}|y|^{\frac{1-d_j}{2}} \frac{e^{-c\frac{|x|+|y|-2}{|x|\wedge |y|}}}{|x|+|y|-2}
\end{equation}
for some constant $c>0$. While if $\gamma_2+2-\frac{d_i+d_j}{2}< 0$, we bound $\lambda^{\gamma_2+2-\frac{d_i+d_j}{2}}$ by $(|x|\wedge |y|)^{-\gamma_2-2+\frac{d_i+d_j}{2}}$ and \eqref{eq_hh} can be then estimated by
\begin{equation}\label{eq_hhh}
    |x|^{-a}|y|^{-b} \frac{e^{-c\frac{|x|+|y|-2}{|x|\wedge |y|}}}{|x|+|y|-2}
\end{equation}
for some $a,b>0$. We use \eqref{eq_hhh} as a general upper bound for $T_H(x,y)$.

\subsection{Hardy-Hilbert Type Inequalities}

In this subsection, we examine the $L^p \textit{-}$boundedness of "\textit{Hardy-Hilbert type}" operators that act between measure spaces $([1,\infty),d\mu_1:=r^{n_1-1}dr)$ and $([1,\infty),d\mu_2:=r^{n_2-1}dr)$, where $n_1,n_2>1$. We also present some Lorentz-type endpoint estimates. These so-called “Hardy–Hilbert type” operators play a key role in understanding the boundedness of $\nabla \Delta^{-1/2}$.

Recall that a kernel $K(x,y)$ is said to be homogeneous of degree $\delta \in \mathbb{R}$ if 
\begin{equation*}
    K(\lambda x,\lambda y) = \lambda^{\delta} K(x,y) \quad \forall \lambda>0, \quad \forall x,y.
\end{equation*}
Integral operators with homogeneous kernel are systematically studied in \cite{HLP}. One can check from the tables in the Appendix~\ref{appendix} that all kernels in our context are homogeneous. We begin by generalizing \cite[Theorem~319]{HLP}.

\begin{lemma}\label{thmHLP}
Let $n_1,n_2,p>1$, and $K$ is an integral operator with kernel $K(x,y)$. Assume that $K(x,y)$ is non-negative with homogeneous degree of $-\delta$, where $\delta=n_2/p + n_1/{p'}$. Suppose that
\begin{equation*}
    \int_0^\infty K(x,1)x^{n_2/p -1}dx = \int_0^\infty K(1,y)y^{n_1/{p'}-1}dy <\infty.
\end{equation*}
Then, $K$ acts as a bounded operator from $L^p(\mathbb{R}^+,r^{n_1-1}dr)\to L^p(\mathbb{R}^+,r^{n_2-1}dr)$.
\end{lemma}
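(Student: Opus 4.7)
The plan is to adapt the classical proof of Hardy–Littlewood–Pólya Theorem 319 to the weighted setting, using a scaling substitution combined with Minkowski's integral inequality. I interpret $K$ as the integral operator $Kf(x) = \int_0^\infty K(x,y) f(y) y^{n_1-1}\, dy$, naturally paired with the source measure $d\mu_1 = r^{n_1 - 1}dr$; the identity $\delta = n_2/p + n_1/p'$ turns out to be exactly the dimensional balance forced by scaling.

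First, I would substitute $y = xt$ in the integral defining $Kf(x)$ and apply the homogeneity $K(x,xt) = x^{-\delta} K(1,t)$, yielding
\begin{equation*}
Kf(x) = x^{n_1 - \delta} \int_0^\infty K(1,t) f(xt) t^{n_1-1}\, dt.
\end{equation*}
The arithmetic identity $n_1 - \delta + (n_2-1)/p = (n_1-1)/p$, equivalent to the hypothesis on $\delta$, allows me to rewrite this as
\begin{equation*}
x^{(n_2-1)/p} Kf(x) = x^{(n_1-1)/p} \int_0^\infty K(1,t) f(xt) t^{n_1-1}\, dt.
\end{equation*}

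Next, I would apply Minkowski's integral inequality to bring the $L^p(dx)$-norm inside the $t$-integral, then evaluate $\|x^{(n_1-1)/p} f(xt)\|_{L^p(dx)}$ via the substitution $u = xt$; a direct computation gives that this inner norm equals $t^{-n_1/p}\|f\|_{L^p(u^{n_1-1}du)}$. Combining these steps yields the desired bound
\begin{equation*}
\|Kf\|_{L^p(x^{n_2-1}dx)} \leq \left(\int_0^\infty K(1,t) t^{n_1/p'-1}\, dt\right) \|f\|_{L^p(y^{n_1-1}dy)},
\end{equation*}
with finite constant by hypothesis. The asserted equality of the two integral forms stated in the lemma follows from the substitution $t \mapsto 1/t$ together with the homogeneity of $K$, so this symmetry is automatic and really needs to be verified only once.

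The argument presents no genuine analytic obstacle — the work is almost entirely bookkeeping of exponents. The one point to be careful about is that the operator must be paired with $y^{n_1-1}dy$ rather than Lebesgue measure for the homogeneity degree $-\delta$ to match the weights on both sides; this is precisely what forces the hypothesis to read $\delta = n_2/p + n_1/p'$ rather than some other exponent.
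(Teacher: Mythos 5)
Your proof is correct and is essentially the same argument as the paper's: both exploit the multiplicative scaling structure of the kernel, the paper by substituting $g(x)=x^{n_1/p}f(x)$, $h(x)=x^{\delta-n_1/p'}K(x,1)$ to rewrite $Kf$ as a convolution on $(\mathbb{R}^+,\tfrac{dr}{r})$ and invoking Young's inequality, you by inlining that same mechanism through the substitution $y=xt$ and Minkowski's integral inequality. The exponent bookkeeping and the resulting constant $\int_0^\infty K(1,t)\,t^{n_1/p'-1}\,dt$ agree exactly with the paper's $\|h\|_{L^1(\mathbb{R}^+,\frac{dx}{x})}$, so no gap remains.
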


\begin{proof}
Let $g(x) = x^{n_1/p}f(x)$ and $h(x) = x^{\delta-n_1/{p'}}K(x,1)$. Then
\begin{align*}
    K(f)(x) &= \int_0^\infty K(x,y) f(y) y^{n_1} \frac{dy}{y} = \int_0^\infty y^{n_1/{p'}-\delta}K(x/y,1) \left(y^{n_1/p} f(y)\right) \frac{dy}{y}\\
    &= x^{n_1/{p'}-\delta} \int_0^\infty h(x/y) g(y) \frac{dy}{y} = x^{n_1/{p'}-\delta} (h\Tilde{*}g)(x),
\end{align*}
where $\Tilde{*}$ denotes the convolution on multiplicative group $(\mathbb{R}^+, \frac{dr}{r})$.

Therefore, by Young's convolution inequality (see for example \cite[Section 1.2]{G})
\begin{align*}
    \|K(f)\|_{L^p(\mathbb{R}^+,r^{n_2-1}dr)} &= \left(\int_0^\infty x^{p(n_1/{p'}-\delta)+n_2}|(h\Tilde{*}g)(x)|^p \frac{dx}{x} \right)^{1/p}\\
    &= \|h\Tilde{*}g\|_{L^p(\mathbb{R}^+,\frac{dx}{x})} \le \|h\|_{L^1(\mathbb{R}^+,\frac{dx}{x})} \|g\|_{L^p(\mathbb{R}^+,\frac{dx}{x})}\\
    &= \|h\|_{L^1(\mathbb{R}^+,\frac{dx}{x})} \|f\|_{L^p(\mathbb{R}^+,r^{n_1-1}dr)}.
\end{align*}
The proof is complete since
\begin{equation*}
    \|h\|_{L^1(\mathbb{R}^+,\frac{dx}{x})} = \int_0^\infty x^{\delta-n_1/{p'}-1}K(x,1)dx <\infty.
\end{equation*}

\end{proof}

Next, we consider an integral operator $K$ acting between spaces $([1,\infty),d\mu_1)$ and $ ([1,\infty),d\mu_2)$, where $d\mu_j(r) = r^{n_j-1}dr$ for $j=1,2$. The kernel is given by 
\begin{equation}\label{eq_K}
    K(x,y) = 
    \begin{cases}
       x^{-\alpha}y^{-\beta}, & x\le y,\\
       x^{-\alpha'}y^{-\beta'}, & x> y
    \end{cases}
\end{equation}
with $\alpha,\alpha',\beta,\beta' \ge 0$. We decompose this operator into two parts, corresponding to the two regions $x\le y$ and $x>y$
\begin{gather*}
    R_1(f)(x) = x^{-\alpha} \int_x^\infty y^{-\beta}f(y)d\mu_1(y) \quad
    R_2(f)(x) = x^{-\alpha'}\int_1^x y^{-\beta'}f(y)d\mu_1(y). 
\end{gather*}
Note that $R_1$ has adjoint operator given by
\begin{equation*}
    R_1^*(f)(x) = x^{-\beta}\int_1^x y^{-\alpha}f(y)d\mu_2(y),
\end{equation*}
which coincides with $R_2$ upon the substitutions $\alpha \to \beta'$, $\beta \to \alpha'$, $n_1\to n_2$ and $n_2\to n_1$. Operators of the form \eqref{eq_K} are also studied in \cite[Proposition 5.1]{GH1}, \cite[Lemma 5.4]{HS1D} and \cite[Lemma 2.6]{N}.

\begin{lemma}\cite[Lemma 2.6]{N,nix2019resolvent}\label{le_N}
Let $K$ be an integral operator defined by \eqref{eq_K}. If $p(\alpha+\beta-n_1)>n_2-n_1$, $p(\alpha'+\beta'-n_1)>n_2-n_1$ and
\begin{equation*}
    \frac{n_2}{n_2 \wedge \alpha'} < p < \frac{n_1}{0 \vee (n_1-\beta)},
\end{equation*}
then $K$ is bounded from $L^p([1,\infty),d\mu_1)\to L^p([1,\infty),d\mu_2)$.

\end{lemma}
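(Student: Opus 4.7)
The plan is to decompose $K = R_1 + R_2$ (as already indicated in the preamble) and bound each summand separately via Schur's test with power-function weights. The two summands are structurally symmetric under the involution swapping $(\alpha,\beta,n_1) \leftrightarrow (\beta',\alpha',n_2)$, reflecting that $R_2$ is, up to this relabelling, the adjoint of $R_1$ with respect to the natural $L^2$ pairings. I will therefore focus on $R_1$ and indicate the analogous output for $R_2$.

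For $R_1$, which has kernel $K_1(x,y) = x^{-\alpha} y^{-\beta} \mathbf{1}_{\{1 \le x \le y\}}$, I will apply the generalized Schur test for $T : L^p(d\mu_1) \to L^p(d\mu_2)$: given positive weights $\omega_1, \omega_2$ satisfying
\begin{equation*}
\int K_1(x,y)\, \omega_2(y)^{p'}\, d\mu_1(y) \le B\, \omega_1(x)^{p'}, \qquad \int K_1(x,y)\, \omega_1(x)^{p}\, d\mu_2(x) \le A\, \omega_2(y)^{p},
\end{equation*}
one obtains $\|R_1\|_{L^p \to L^p} \le A^{1/p} B^{1/p'}$. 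The natural ansatz $\omega_1(x) = x^{c_1}$, $\omega_2(y) = y^{c_2}$ reduces both conditions to explicit monomial integrations. Tracking the dominant exponents at infinity yields the constraints
\begin{equation*}
c_2 < \tfrac{\beta - n_1}{p'}, \qquad c_1 - c_2 \ge \tfrac{n_1 - \alpha - \beta}{p'}, \qquad c_1 - c_2 \le \tfrac{\alpha + \beta - n_2}{p},
\end{equation*}
together with the non-degeneracy requirement $c_1 > (\alpha - n_2)/p$ that selects the polynomial regime of the $x$-integral.

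The existence of an admissible pair $(c_1, c_2)$ is exactly where the hypotheses of the lemma enter. Compatibility of the two bounds on $c_1 - c_2$ forces $\alpha + \beta \ge n_1/p' + n_2/p$, which rearranges to $p(\alpha + \beta - n_1) \ge n_2 - n_1$; the strict version in the hypothesis leaves a nontrivial range of valid $c_1 - c_2$. With $c_1 - c_2$ fixed, the requirement that the upper bound on $c_2$ be compatible with the lower bound on $c_1$ collapses (after cancellations using $p + p' = pp'$) to $\beta > n_1/p'$, i.e.\ $p < n_1/(n_1 - \beta)$ when $\beta < n_1$ and no constraint otherwise, reproducing the $0 \vee$ form. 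The same procedure applied to $R_2$ produces the twin conditions $p(\alpha' + \beta' - n_1) > n_2 - n_1$ and $p > n_2/\alpha'$ (when $0 < \alpha' < n_2$, automatic otherwise, giving the $n_2 \wedge \alpha'$ form).

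The main obstacle is the bookkeeping of boundary cases: the asymptotic behaviour of $\int_1^r x^c\, dx$ and $\int_r^\infty y^c\, dy$ depends sharply on the sign of $c + 1$, so a careful case split is needed to ensure the Schur integrals admit the required polynomial bounds in all degenerate scenarios ($\beta \ge n_1$, $\alpha' \ge n_2$, the logarithmic threshold $c = -1$, etc.). Without this check, one cannot recover the precise $0 \vee$ and $\wedge$ operations appearing in the stated $p$-range, nor exclude the marginal cases where $c_1 - c_2$ has an empty admissible interval.
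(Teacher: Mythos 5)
The paper does not prove this lemma; it is quoted verbatim from Nix's thesis, so there is no internal proof to compare against. Your weighted Schur-test argument is correct and self-contained. I checked the exponent bookkeeping for $R_1$: the two Schur integrals do reduce to exactly the constraints you list, namely $c_2<(\beta-n_1)/p'$ (convergence of the $y$-integral at infinity), $c_1>(\alpha-n_2)/p$ (polynomial regime of the $x$-integral), and $(n_1-\alpha-\beta)/p'\le c_1-c_2\le(\alpha+\beta-n_2)/p$; their simultaneous solvability is equivalent to $p(\alpha+\beta-n_1)>n_2-n_1$ together with $\beta>n_1/p'$, the latter being precisely $p<n_1/(0\vee(n_1-\beta))$, and the adjoint relabelling $(\alpha,\beta,n_1)\leftrightarrow(\beta',\alpha',n_2)$ yields the twin conditions for $R_2$, including $p>n_2/(n_2\wedge\alpha')$. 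Two points deserve explicit mention in a final write-up. First, the condition $\beta>n_1/p'$ only emerges after choosing $c_1-c_2$ sufficiently close to its upper limit $(\alpha+\beta-n_2)/p$; for an arbitrary admissible value of $c_1-c_2$ the window for $c_2$ may be empty, so the order of the choices matters and should be spelled out (fix $t=c_1-c_2$ near the right endpoint first, then pick $c_2$ in the resulting nonempty open interval). Second, the boundary cases you worry about are harmless precisely because every hypothesis of the lemma is strict: all four constraints define an open, nonempty region in $(c_1,c_2)$, so one can always avoid the logarithmic thresholds, and the degenerate readings of the $0\vee$ and $\wedge$ (e.g.\ $\beta\ge n_1$ or $\alpha'\ge n_2$) simply make the corresponding constraint vacuous rather than requiring a separate argument.
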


To deal with the endpoint, we prove the following lemmata in the setting of Lorentz spaces.

\begin{lemma}\label{thmRW}
 For $0<\beta<n_1$ and $\alpha>0$, $R_1$ is bounded from (of restricted weak type $(p,q)$)
 \begin{equation*}
     L^{p,1}([1,\infty),d\mu_1)\to L^{q,\infty}([1,\infty),d\mu_2)
 \end{equation*}
for $1<p\le \frac{n_1}{n_1-\beta}$ and $q\ge \frac{n_2}{\alpha}$.

\end{lemma}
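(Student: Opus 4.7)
The plan is to exploit the pointwise bound
\begin{equation*}
R_1 \chi_E(x) = x^{-\alpha} \int_{E \cap [x,\infty)} y^{-\beta}\, d\mu_1(y) \le h \cdot x^{-\alpha}, \qquad h := \int_E y^{-\beta}\, d\mu_1(y),
\end{equation*}
valid for any measurable $E \subset [1,\infty)$ with $\mu_1(E)<\infty$. Since restricted weak type $(p,q)$ amounts to $\|R_1\chi_E\|_{L^{q,\infty}(d\mu_2)} \lesssim \mu_1(E)^{1/p}$ for all such $E$, the pointwise bound will split the problem into two independent tasks: verifying that $x^{-\alpha}$ belongs to $L^{q,\infty}(d\mu_2)$, and showing that $h \lesssim \mu_1(E)^{1/p}$.

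For the first task, I would compute the distribution function of $x^{-\alpha}$ on $([1,\infty), d\mu_2)$ directly: for $0 < s \le 1$,
\begin{equation*}
d_{x^{-\alpha}}(s) = \int_1^{s^{-1/\alpha}} x^{n_2-1}\, dx = \tfrac{1}{n_2}\bigl(s^{-n_2/\alpha} - 1\bigr),
\end{equation*}
so $(x^{-\alpha})^*(t) = (n_2 t + 1)^{-\alpha/n_2}$, and therefore $\sup_{t>0} t^{1/q}(x^{-\alpha})^*(t) < \infty$ precisely when $q \ge n_2/\alpha$. The analogous calculation on $([1,\infty), d\mu_1)$ yields $(y^{-\beta})^*(t) = (n_1 t + 1)^{-\beta/n_1}$, and hence $y^{-\beta} \in L^{p',\infty}(d\mu_1)$ if and only if $p' \ge n_1/\beta$, i.e.\ $p \le n_1/(n_1-\beta)$. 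Both endpoint constraints in the statement thus arise from this single rearrangement computation; the hypothesis $0<\beta<n_1$ is exactly what makes $n_1/(n_1-\beta)>1$ so that the range of $p$ is nonempty.

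To bound $h$, I would invoke H\"older's inequality in Lorentz spaces, $\int |fg|\, d\mu \lesssim \|f\|_{L^{p,1}} \|g\|_{L^{p',\infty}}$, applied with $f = \chi_E$ and $g = y^{-\beta}$. Using the elementary identity $\|\chi_E\|_{L^{p,1}(d\mu_1)} = p\,\mu_1(E)^{1/p}$ and the Lorentz bound for $y^{-\beta}$ established above, this yields $h \lesssim \mu_1(E)^{1/p}$. Combining with the pointwise estimate gives
\begin{equation*}
\|R_1\chi_E\|_{L^{q,\infty}(d\mu_2)} \le h\,\|x^{-\alpha}\|_{L^{q,\infty}(d\mu_2)} \lesssim \mu_1(E)^{1/p},
\end{equation*}
which is exactly restricted weak type $(p,q)$.

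I do not foresee a serious obstacle: the argument reduces to an explicit rearrangement computation on a half-line with polynomial measure together with H\"older in Lorentz spaces. The one mildly delicate point is the simultaneous treatment of the endpoints $p = n_1/(n_1-\beta)$ and $q = n_2/\alpha$, but these are handled uniformly because the formula $(z^{-\gamma})^*(t) = (n t + 1)^{-\gamma/n}$ is exact rather than only asymptotic, so the suprema defining the Lorentz norms are genuinely finite at the endpoint.
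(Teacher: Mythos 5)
Your proposal is correct and takes essentially the same route as the paper: both proofs hinge on the Hardy--Littlewood/H\"older inequality in Lorentz form to control the inner integral by the $L^{p',\infty}(d\mu_1)$ quasi-norm of the weight $y^{-\beta}$ (finite exactly when $p\le \frac{n_1}{n_1-\beta}$), followed by the observation that $x^{-\alpha}\in L^{q,\infty}(d\mu_2)$ precisely for $q\ge \frac{n_2}{\alpha}$. The only cosmetic difference is that the paper runs the H\"older step directly on a general $f\in L^{p,1}$, using the truncated weight $y^{-\beta}\mathcal{X}_{[x,\infty)}(y)$ and checking its $L^{p',\infty}$ norm is bounded uniformly in $x$, whereas you test on characteristic functions and appeal to the standard equivalence of restricted weak type with $L^{p,1}\to L^{q,\infty}$ boundedness.
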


\begin{proof}
 Let $x\ge 1$, and $f\in L^{p,1}$. Set $\mathcal{X}_{[x,\infty)}$ to be the characteristic function on interval $[x,\infty)$. By Hardy-Littlewood inequality,
\begin{gather*}
    |R_1(f)(x)|\le x^{-\alpha} \int_1^\infty |f(y)| y^{-\beta}\mathcal{X}_{[x,\infty)}(y) d\mu_1(y)\\
    \le x^{-\alpha} \|f\|_{(p,1)} \|F_x\|_{(p',\infty)}
\end{gather*}
where $F_x(y) = y^{-\beta}\mathcal{X}_{[x,\infty)}(y)$. Note that a direct computation gives
\begin{equation*}
    d_{F_x}(\lambda) = \mu_1\left(\{y\ge x: y^{-\beta}>\lambda\}\right)\simeq 
    \begin{cases}
        \lambda^{-\frac{n_1}{\beta}}-x^{n_1}, & \lambda <x^{-\beta}\\
        0, & \lambda \ge x^{-\beta}.
    \end{cases}
\end{equation*}
Therefore,
\begin{align*}
    \|F_x\|_{(p',\infty)} &= \sup_{\lambda>0}  \lambda d_{F_x}(\lambda)^{1/{p'}} \\
    &\simeq \left(\sup_{0<\lambda<x^{-\beta}} \lambda^{p'-d_1/\beta}- x^{d_1} \lambda^{p'}\right)^{1/{p'}}\\
    &\lesssim \sup_{0<\lambda <x^{-\beta}} \lambda^{1-\frac{n_1}{\beta p'}}.
\end{align*}
Now, since $p' \ge \frac{n_1}{\beta}$, we have $1-\frac{n_1}{\beta p'}\ge 0$ and 
\begin{equation*}
    \|F_x\|_{(p',\infty)}\lesssim x^{\beta \left(\frac{n_1}{\beta p'}-1\right)}\lesssim 1
\end{equation*}
uniformly in $x\ge 1$. Thus we have pointwise estimate
\begin{equation*}
    |R_1(f)(x)|\lesssim \|f\|_{(p,1)} x^{-\alpha}.
\end{equation*}
Now, since $x^{-\alpha} \in L^{q,\infty}(d\mu_2)$ if $q\ge n_2/\alpha$, the result follows. 
\end{proof}

\begin{remark}\label{endpoint}
Note that $x^{-\alpha} \in L^{q,1}(d\mu_2)$ if $q> n_2/\alpha$. So if $\frac{n_2}{\alpha}<p\le \frac{n_1}{n_1-\beta}$, then $R_1$ is bounded on $L^{p,1}$.
\end{remark}

\begin{lemma}\label{le_R2}
If $\beta'\ge 0$ and $\alpha'>0$, then $R_2$ is bounded from (of restricted weak type $(p,q)$)
\begin{equation*}
    L^{p,1}([1,\infty), d\mu_1) \to L^{q,\infty}([1,\infty), d\mu_2)
\end{equation*}
provided
\begin{equation*}
q\ge 
    \begin{cases}
        \frac{n_2}{\alpha'}, & \textit{If} \quad p'\ge \frac{n_1}{\beta'},\\
        \frac{n_2}{\alpha'+\beta'-n_1/p'}, & \textit{If} \quad p'< \frac{n_1}{\beta'}.
    \end{cases}
\end{equation*}
\end{lemma}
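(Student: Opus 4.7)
The plan is to mirror the proof of Lemma~\ref{thmRW} but with the cutoff window $[1,x]$ in place of $[x,\infty)$, which forces the analysis of the Lorentz norm of the truncated weight to split into two regimes according to whether $p' \ge n_1/\beta'$ or not. Specifically, assuming $f\ge 0$ without loss of generality, I would write
\begin{equation*}
    R_2(f)(x) = x^{-\alpha'} \int_1^\infty f(y) G_x(y)\, d\mu_1(y),
    \quad G_x(y) = y^{-\beta'}\mathcal{X}_{[1,x]}(y),
\end{equation*}
and apply the Hardy--Littlewood inequality to obtain the pointwise bound
\begin{equation*}
    |R_2(f)(x)| \le x^{-\alpha'} \|f\|_{(p,1)} \|G_x\|_{(p',\infty)}.
\end{equation*}

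The core computation is then the distribution function of $G_x$ with respect to $d\mu_1$. A direct calculation (handling $\beta'=0$ separately as a trivial case) gives
\begin{equation*}
    d_{G_x}(\lambda) \simeq
    \begin{cases}
        x^{n_1}, & 0<\lambda \le x^{-\beta'},\\
        \lambda^{-n_1/\beta'}, & x^{-\beta'}<\lambda <1,\\
        0, & \lambda \ge 1,
    \end{cases}
\end{equation*}
so that
\begin{equation*}
    \|G_x\|_{(p',\infty)}\simeq \sup_{0<\lambda\le x^{-\beta'}} \lambda\, x^{n_1/p'} \ \vee\ \sup_{x^{-\beta'}<\lambda<1} \lambda^{1-n_1/(\beta' p')}.
\end{equation*}
The first supremum equals $x^{n_1/p'-\beta'}$. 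The second supremum depends on the sign of $1-n_1/(\beta' p')$: if $p'\ge n_1/\beta'$ the exponent is $\ge 0$ and the supremum is $\simeq 1$, whereas if $p'<n_1/\beta'$ the exponent is negative and the supremum is attained at $\lambda = x^{-\beta'}$, also giving $x^{n_1/p'-\beta'}$. Combining and using $x\ge 1$ (so $x^{n_1/p'-\beta'}\le 1$ precisely when $p'\ge n_1/\beta'$), we conclude
\begin{equation*}
    \|G_x\|_{(p',\infty)} \lesssim
    \begin{cases}
        1, & p'\ge n_1/\beta',\\
        x^{n_1/p'-\beta'}, & p'<n_1/\beta'.
    \end{cases}
\end{equation*}

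Feeding this back in yields the pointwise estimates $|R_2(f)(x)|\lesssim \|f\|_{(p,1)}\,x^{-\alpha'}$ in the first case and $|R_2(f)(x)|\lesssim \|f\|_{(p,1)}\,x^{-\alpha'-\beta'+n_1/p'}$ in the second. The conclusion then follows from the elementary observation that for $\sigma>0$ the function $x^{-\sigma}$ belongs to $L^{q,\infty}([1,\infty),d\mu_2)$ if and only if $q\ge n_2/\sigma$; applying this with $\sigma=\alpha'>0$ in the first case and $\sigma=\alpha'+\beta'-n_1/p'$ in the second (this is positive precisely under the hypothesis that makes $n_2/(\alpha'+\beta'-n_1/p')$ meaningful) gives exactly the stated ranges of $q$.

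The main technical point, and the one I expect to require the most care, is the piecewise description of $d_{G_x}$ and the resulting bookkeeping in the supremum defining $\|G_x\|_{(p',\infty)}$; once these two regimes are clearly separated, the rest of the argument is a verbatim adaptation of the proof of Lemma~\ref{thmRW}. A minor bookkeeping issue is the boundary case $\beta'=0$, where $G_x$ reduces to a characteristic function and only the regime $p'<n_1/\beta'$ (interpreted as always holding) contributes; this is consistent with the formula and requires no separate treatment.
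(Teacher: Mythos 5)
Your proposal is correct and follows essentially the same route as the paper: bound $R_2(f)(x)$ pointwise via the Hardy--Littlewood inequality against $\|f\|_{(p,1)}\|y^{-\beta'}\mathcal{X}_{[1,x]}\|_{(p',\infty)}$, compute the distribution function of the truncated weight, split into the regimes $p'\ge n_1/\beta'$ and $p'<n_1/\beta'$, and conclude from the membership of $x^{-\sigma}$ in $L^{q,\infty}(d\mu_2)$. The only differences are cosmetic (your explicit remarks on the $\beta'=0$ case and on the positivity of $\alpha'+\beta'-n_1/p'$, which the paper leaves implicit).
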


\begin{proof}
Similar to the proof of Lemma \ref{thmRW}, we have by Hardy-Littlewood inequality
\begin{equation*}
    |R_2(f)(x)| \le x^{-\alpha'} \|f\|_{(p,1)} \|g\|_{(p',\infty)},
\end{equation*}
where $g(y) = y^{-\beta'}\mathcal{X}_{[1,x]}(y)$. Now, a straightforward computation yields
\begin{equation*}
    d_g(\lambda) \lesssim \begin{cases}
        x^{n_1}, & 0<\lambda<x^{-\beta'},\\
        \lambda^{-n_1/\beta'}, & x^{-\beta'}\le \lambda<1,\\
        0, & \lambda\ge 1,
    \end{cases}
\end{equation*}
which implies
\begin{gather*}
    \|g\|_{(p',\infty)} \lesssim \begin{cases}
        1, & p'\ge \frac{n_1}{\beta'},\\
        x^{n_1/{p'}-\beta'}, & p'< \frac{n_1}{\beta'}.
    \end{cases}
\end{gather*}
The result follows immediately.
\end{proof}

Finally, we state the following lemma to manage the kernel of $T_H$ in \eqref{eq_hhh} (essentially a reformulation of \cite[Lemma 2.7]{N,nix2019resolvent} and\cite[Theorem 5.5]{HS1D}).

\begin{lemma}\label{le_TH}
Let $n_1,n_2>1$ and $K$ be an integral operator with kernel:
\begin{equation*}
    K(x,y) = x^{-a}y^{-b} \frac{e^{-c\frac{x+y-2}{x\wedge y}}}{x+y-2}, \quad x,y\ge 1, \quad a,b>0.
\end{equation*}
Then, $K$ is bounded from $L^p([1,\infty),d\mu_1)\to L^p([1,\infty),d\mu_2)$ for all $p\in (1,\infty)$.
\end{lemma}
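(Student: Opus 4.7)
The plan is to exploit the exponential decay $e^{-c(x+y-2)/(x\wedge y)}$ of the kernel to reduce the problem to a dilation-convolution estimate. Unlike the purely homogeneous situation of Lemma~\ref{le_N}, where the admissible range of $p$ is restricted, here the exponential localizes the kernel to a neighborhood of the diagonal and should eliminate those restrictions.

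First I would decompose $K=K_-+K_+$ according to whether $y\le x$ or $y>x$; by a symmetry/adjoint argument between the two pieces it suffices to treat $K_+$. On $\{y>x\}$ we have $x\wedge y=x$, so the kernel reads
\[
K_+(x,y)=x^{-a}y^{-b}\,\frac{e^{-c(x+y-2)/x}}{x+y-2},
\]
and the exponential factor decays like $e^{-cy/x}$ in the ratio $y/x$. The natural change of variable is $y=xs$ with $s\ge 1$, which (after simplifying $x+y-2$ and combining with the weight $y^{n_1-1}$) rewrites the operator as
\[
K_+f(x)\lesssim x^{\,n_1-a-b-1}\int_1^{\infty}\psi(s)\,f(xs)\,ds,
\]
where $\psi$ carries the residual factor $s^{\,n_1-b-2}e^{-cs}/(1+1/s)$ and lies in $L^1((1,\infty))$ with exponential decay at infinity, uniformly in the parameters.

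To bound the resulting dilation-convolution in $L^p(d\mu_2)$, I would apply Minkowski's integral inequality to commute the $L^p$ norm past the $ds$ integration, and then invoke the dilation identity $\|f(\cdot\, s)\|_{L^p(x^{k-1}dx)}=s^{-k/p}\|f\|_{L^p(x^{k-1}dx)}$. Matching the relevant weights and changing variables back, the required estimate reduces to the finiteness of a scalar integral of the form $\int_1^{\infty}\psi(s)\,s^{-\sigma}\,ds$ for an explicit $\sigma$ depending on $p$, $a$, $b$, $n_1$, $n_2$; this integral converges for \emph{every} real $\sigma$ by the exponential decay of $\psi$, which is precisely what makes the full range $p\in(1,\infty)$ accessible. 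The piece $K_-$ is handled by the analogous substitution $x=yt$ with $t\ge 1$, or equivalently by passing to the adjoint with $p$ replaced by $p'$.

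The main obstacle will be the corner $x=y=1$, where the denominator $x+y-2$ vanishes and the clean dilation-convolution picture degenerates. On a fixed bounded neighborhood of this corner, however, the exponential factor remains of order one while the singularity $1/(x+y-2)$ is only logarithmic, which is integrable against the smooth measures $d\mu_1$ and $d\mu_2$; the local contribution therefore defines a bounded operator on $L^p$ for all $p\in(1,\infty)$ via a direct Schur test with constant weights. Splicing the local corner estimate with the global dilation-convolution estimate then completes the proof.
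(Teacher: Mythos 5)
Your global dilation--convolution scheme is where the argument breaks down, and the break is not repairable from the stated hypotheses. After substituting $y=xs$ and applying Minkowski's inequality you must estimate
\[
\Bigl\|x^{\,n_1-a-b-1}f(xs)\Bigr\|_{L^p(x^{n_2-1}dx)}
= s^{-(n_1-a-b-1)-n_2/p}\Bigl(\int_s^\infty u^{\,p(n_1-a-b-1)+n_2-1}|f(u)|^p\,du\Bigr)^{1/p},
\]
and ``matching the relevant weights'' to recover $\|f\|_{L^p(u^{n_1-1}du)}$ requires $u^{p(n_1-a-b-1)+n_2-1}\le u^{n_1-1}$ for $u\ge 1$, i.e.\ $p(n_1-a-b-1)\le n_1-n_2$. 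This does not follow from $a,b>0$ and $n_1,n_2>1$, and it is exactly the obstruction you cannot avoid: on the diagonal the exponential gives no decay, since $(x+y-2)/(x\wedge y)\le 4$ for $x,y\in[R,2R]$, so $K\gtrsim R^{-a-b-1}$ on that box, and testing on $f=\mathcal{X}_{[R,2R]}$ gives $\|Kf\|_{L^p(d\mu_2)}/\|f\|_{L^p(d\mu_1)}\gtrsim R^{\,n_1-a-b-1+(n_2-n_1)/p}$. Boundedness for all $p\in(1,\infty)$ therefore forces $a+b+1\ge n_1\vee n_2$, a condition absent from the hypotheses. The exponential factor only damps the off-diagonal ratio $y/x$; your remark that $\int_1^\infty\psi(s)s^{-\sigma}\,ds$ converges for every $\sigma$ concerns the $s$-integral and does not touch this diagonal obstruction, which sits in the $u$-integral you discarded. (The paper's own proof is equally terse at this point and implicitly relies on the particular exponents $a,b$ produced in \eqref{eq_hhh}; any correct argument must carry an explicit relation between $a+b$ and $n_1\vee n_2$.)

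Separately, your corner treatment is also incorrect as written, though this part is fixable. A Schur test with constant weights fails because $\int_1^2(x+y-2)^{-1}\,dy=\log\frac{x}{x-1}$ is not uniformly bounded for $x\in(1,2]$: the kernel $1/(x+y-2)$ is not ``only logarithmic'' but is, after the shift $s=x-1$, $t=y-1$, the Hilbert kernel $1/(s+t)$, whose $L^p$-boundedness for $1<p<\infty$ requires the Hardy--Littlewood--P\'olya theorem on homogeneous kernels of degree $-1$ (equivalently a Schur test with weights $(y-1)^{-1/p}$) --- which is what the paper invokes --- and which genuinely fails at $p=1$ and $p=\infty$, so no constant-weight argument can work.
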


\begin{proof}
For the case $2\le x+y\le 4$, this kernel is essentially bounded by the kernel $\frac{1}{s+t}$ acting on $\mathbb{R}^+$. It then follows by \cite[Section 9.1]{HLP} that this part of kernel acts boundedly from $L^p([1,\infty),d\mu_1)\to L^p([1,\infty),d\mu_2)$ for $p\in (1,\infty)$. Regarding the case where $x+y\ge 4$, the exponential term in the kernel can be bounded by $\min ((x/y)^{-N}, (y/x)^{-N})$ for any $N>0$ which again acts boundedly from $L^p([1,\infty),d\mu_1)\to L^p([1,\infty),d\mu_2)$ for $p\in (1,\infty)$ by say, Hölder's inequality.
\end{proof}

\subsection{Proof of Theorem~\ref{thm_main}}

This section is devoted to prove the main result, Theorem \ref{thm_main}. We begin with the positive result. Using notations from Section~\ref{sec_2}, by \eqref{riesz decomposition}, the Riesz transform can be decomposed into pieces:
\begin{equation*}
    \nabla \Delta^{-1/2} = T_L + T_H + R_{kl}.
\end{equation*}
We start by studying the "$kl$" part i.e.
\begin{equation}
    R_{kl} = \int_0^\infty \nabla (\Delta+k^2)^{-1}_{kl}dk.
\end{equation}

Let $1<d_1<d_2$. Observe that by \eqref{eq_y>1} and \eqref{eq_y<1}, the "$kl$" part only supports in $Q_1$ or $Q_3$. That is either $x,y\ge 1$ or $x,y\le -1$ (i.e. the "dimension" does not change after mapping). As a consequence, $R_{kl}$, by \cite[Theorem 5.1, Remark 5.2]{HS1D}, is bounded on $L^p(\Tilde{\mathbb{R}},d\mu)$ for $p\in (1,\infty)$. In addition, $R_{kl}$ is bounded from $L^{p_0,1}\to L^{p_0,\infty}$ by interpolation.

Next, by Lemma \ref{le_TH} and estimate \eqref{eq_hhh}, we conclude that for all $1<d_1<d_2$, $T_H$ is bounded on $L^p(\Tilde{\mathbb{R}},d\mu)$ for $p\in (1,\infty)$. In addition, $T_H$ is of restricted weak type $(p_0,p_0)$ by interpolation.

It remains to verify the boundedness of $T_L$. By notations from last section, $T_L$ can be further decomposed into
\begin{equation*}
    T_L = I_1 + II_1 + III_1 + IV_1,
\end{equation*}
and it is enough to examine them individually. The strategy is to apply Lemma \ref{le_N} to $I_1,II_1,IV_1$ and to employ Lemma \ref{thmHLP} to $III_1$. Since the analysis for all cases are similar, we only give details for the case $1<d_1<2<d_2$ as an example.

$\bullet$ $\textbf{$I_1$.}$ From tables in Appendix~\ref{appendix}, we know that for $x,y\ge 1$
\begin{equation*}
    |I_1(x,y)|\lesssim \begin{cases}
        x^{1-d_2} y^{d_1-d_2-1}, & 1\le x\le y,\\
        x^{d_1-d_2-2} y^{2-d_2}, & x>y.
    \end{cases}
\end{equation*}
Then, by Lemma \ref{le_N} with $n_1=n_2=d_2$, $\alpha=d_2-1$, $\beta=d_2-d_1+1$, $\alpha'=d_2-d_1+2$ and $\beta'=d_2-2$, we conclude that $I_1$ is bounded on $L^p([1,\infty),r^{d_2-1}dr)$ for
\begin{equation}\label{I}
    1 =\frac{d_2}{d_2 \wedge (d_2-d_1+2)} < p < \frac{d_2}{0 \vee (d_1-1)} = \frac{d_2}{d_1-1},
\end{equation}
since $p(\alpha+\beta-n_1)=p(\alpha'+\beta'-n_1)=p(d_2-d_1)>(d_2-d_2)=0$. In addition, by Lemma~\ref{thmRW} and Lemma \ref{le_R2}, $I_1$ is of restricted weak type $(d_1',d_1')$.

$\bullet$ $\textbf{$II_1$.}$ For $x\le -1$ and $y\ge 1$, 
\begin{equation*}
    |II_1(x,y)|\simeq \begin{cases}
        |x|^{1-d_1} y^{d_1-d_2-1}, & 1\le |x|\le y,\\
        |x|^{-2} y^{2-d_2}, & |x|>y.
    \end{cases}
\end{equation*}
Now, by Lemma \ref{le_N} with $n_1=d_2$, $n_2=d_1$, $\alpha=d_1-1$, $\beta=d_2-d_1+1$, $\alpha'=2$ and $\beta'=d_2-2$, we assert that $II_1$ is bounded from $L^p([1,\infty),r^{d_2-1}dr)\to L^p((-\infty,-1],r^{d_1-1}dr)$ for
\begin{equation}\label{II}
    1 =\frac{d_1}{d_1 \wedge 2} < p < \frac{d_2}{0 \vee (d_1-1)} = \frac{d_2}{d_1-1},
\end{equation}
since $p(\alpha+\beta-n_1)=p(\alpha'+\beta'-n_1)=0>d_1-d_2$. In addition, by Lemma \ref{thmRW} and Lemma~\ref{le_R2}, $II_1$ is of restricted weak type $(d_1',d_1')$.

$\bullet$ $\textbf{$III_1$.}$ For $x,y\le -1$, 
\begin{equation*}
    |III_1(x,y)|\lesssim \begin{cases}
        |x|^{1-d_1} |y|^{-1}, & 1\le |x|\le |y|,\\
        |x|^{-d_1}, & |x|>|y|.
    \end{cases}
\end{equation*}
Next, by Lemma \ref{thmHLP} with $n_1=n_2=d_1$ and $\delta=d_1$, we deduce that $III_1$ is bounded on $L^p((-\infty,-1],r^{d_1-1}dr)$ for
\begin{equation}\label{III}
    1 < p < d_1',
\end{equation}
since for $p$ in the above range
\begin{equation*}
    \int_0^\infty |III_1(x,1)| x^{d_1/p-1}dx \lesssim \int_0^1 x^{-d_1/p'}dx + \int_1^\infty x^{-d_1/p'-1}dx <\infty.
\end{equation*}
In addition, by Lemma \ref{thmRW} and Lemma \ref{le_R2}, $III_1$ is of restricted weak type $(d_1',d_1')$.

$\bullet$ $\textbf{$IV_1$.}$ For $x\ge 1$ and $y\le -1$, 
\begin{equation*}
    |IV_1(x,y)|\simeq \begin{cases}
        x^{1-d_2} |y|^{-1}, & 1\le x\le |y|,\\
        x^{-d_2}, & x>|y|.
    \end{cases}
\end{equation*}
Now, by Lemma \ref{le_N} with $n_1=d_1$, $n_2=d_2$, $\alpha=d_2-1$, $\beta=1$, $\alpha'=d_2$ and $\beta'=0$, we claim that $IV_1$ is bounded from $L^p((-\infty,-1],r^{d_1-1}dr)\to L^p([1,\infty),r^{d_2-1}dr)$ for
\begin{equation}\label{IV}
    1 =\frac{d_2}{d_2 \wedge d_2} < p < \frac{d_1}{0 \vee (d_1-1)} = d_1',
\end{equation}
since $p(\alpha+\beta-n_1)=p(\alpha'+\beta'-n_1)=p(d_2-d_1)>d_2-d_1$. In addition, by Lemma \ref{thmRW} and Lemma \ref{le_R2}, $IV_1$ is of restricted weak type $(d_1',d_1')$.

\medskip
For convinence, we summarise the range of $L^p\textit{-}$boundedness for all cases in the following table.
\begin{table}[!htbp]
    \centering
    \begin{tabular}{|c|c|c|c|c|c|}
    \hline
    \diagbox{Q}{C}     & $1<d_1<d_2<2$ & $1<d_1<d_2=2$ & $1<d_1<2<d_2$ & $2=d_1<d_2$ & $2<d_1<d_2$  \\
    \hline
    $I_1$ & $(1,\frac{d_2}{d_1-1})$     & $(1,\frac{2}{d_1-1})$ & $(1,\frac{d_2}{d_1-1})$
     & $(1,d_2)$ & $(1,d_2)$
    \\
    \hline
    $II_1$ &  $(1,\frac{d_2}{d_1-1})$ & $(1,\frac{2}{d_1-1})$  &  $(1,\frac{d_2}{d_1-1})$
     & $(1,d_2)$ & $(1,d_2)$
     \\
    \hline
    $III_1$ & $(1,d_1')$ &$(1,d_1')$   &  $(1,d_1')$
     &$(1,2)$ & $(1,d_1)$
     \\
    \hline
    $IV_1$ &  $(1,d_1')$ &$(1,d_1')$  &  $(1,d_1')$
     &$(1,2)$ &$(1,d_1)$\\
    \hline
    $T_L$& $(1,d_1')$ &$(1,d_1')$ &$(1,d_1')$ &$(1,2)$ &$(1,d_1)$\\
    \hline
    \end{tabular}
    \caption{}
    \label{tab:my_label}
\end{table}

Therefore, combining the results of $T_L$, $T_H$ and $R_{kl}$, we conclude that for any pair $d_1,d_2>1$, the Riesz transform on broken line, $\nabla \Delta^{-1/2}$, is bounded on $L^p(\Tilde{\mathbb{R}},d\mu)$ for
\begin{equation*}
    1< p < d_* \vee d_*' := p_0\quad \textrm{or}\quad p=2,
\end{equation*}
where $d_*= d_1 \wedge d_2$. In addition, $\nabla \Delta^{-1/2}$ is of restricted weak type $(p_0,p_0)$.
\medskip

To complete our argument, we also need to show that the Riesz transform is $L^p\textit{-}$unbounded for $p\ge p_0$. We capture this requirement in the following lemma.

\begin{lemma}\label{lemma_Negative}
Under the assumptions of Theorem~\ref{thm_main}, $\nabla \Delta^{-1/2}$ is unbounded on $L^p$ for all $p\ge p_0$, where $p_0 = d_* \vee d_*'$.
\end{lemma}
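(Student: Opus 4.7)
The plan is to reduce unboundedness of $\nabla\Delta^{-1/2}$ on $L^p$ to unboundedness of the low-energy piece $III_1$ on the $d_1$-end, and then to exhibit an explicit family of test functions producing a divergent norm ratio.

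First, by the decomposition $\nabla\Delta^{-1/2}=T_L+T_H+R_{kl}$ in~\eqref{riesz decomposition} and the $L^p$-boundedness of $T_H$ (Lemma~\ref{le_TH}) and $R_{kl}$ (cited from~\cite{HS1D}) for every $p\in(1,\infty)$, the $L^p$-unboundedness of $\nabla\Delta^{-1/2}$ is equivalent to that of $T_L$. Since the four pieces $I_1,II_1,III_1,IV_1$ are supported on disjoint quadrants and $III_1$ is the only one sending a function on the $d_1$-end back to the $d_1$-end, it is enough to show that $III_1$ is unbounded on $L^p((-\infty,-1],d\mu_1)$ for every $p\ge p_0$ with $p\ne 2$ (the spectral theorem gives boundedness on $L^2$).

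Second, I would upgrade the upper bound $|III_1(x,y)|\lesssim |x|^{1-d_1}g_{d_1}(|y|)$ on the region $\{|x|\in[1,2],\,|y|\in[2,N]\}$ to a matching lower bound, with $g_{d_1}(y)=y^{1-d_1}$ for $d_1>2$, $g_{d_1}(y)=y^{-1}$ for $1<d_1<2$, and $g_{d_1}(y)=(y\log y)^{-1}$ for $d_1=2$. The key ingredient, noted in the Remark of Section~\ref{sec_2}, is that $C(\lambda)$ retains a fixed sign for sufficiently small $\lambda$, as follows from the power-series expansion of $I_\nu$ and $K_\nu$ used to define $C$. Combined with the unambiguous signs $k_1(\lambda|y|)>0$ and $k_1'(\lambda|x|)<0$, the integrand defining $III_1$ is of fixed sign for $\lambda\in(0,(|x|\vee|y|)^{-1})$, upgrading the low-energy estimates of Section~\ref{sec_2} from $\lesssim$ to $\simeq$ on the specified region.

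Third, I would test $III_1$ on $f_N(y)=|y|^{-d_1/p_0}\mathbf{1}_{[2,N]}(|y|)$ placed on the $d_1$-end. A direct integration yields $\|f_N\|_{L^{p_0}(d\mu_1)}^{p_0}=\log(N/2)$, while for $x\in[-2,-1]$,
\[
|III_1 f_N(x)|\gtrsim |x|^{1-d_1}\int_2^N g_{d_1}(y)\,y^{-d_1/p_0}\,y^{d_1-1}\,dy \simeq |x|^{1-d_1}\,J_{d_1}(N),
\]
where $J_{d_1}(N)\simeq \log N$ when $d_1\ne 2$ and $J_{d_1}(N)\simeq \log\log N$ when $d_1=2$. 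Since $|x|^{1-d_1}$ is bounded below on $[-2,-1]$, one obtains $\|III_1 f_N\|_{p_0}\gtrsim \log N$ when $d_1\ne 2$, so $\|III_1 f_N\|_{p_0}/\|f_N\|_{p_0}\gtrsim (\log N)^{1/p_0'}\to\infty$, giving unboundedness at $p=p_0$. The case $p>p_0$ (and, separately, $p>2$ when $d_1=2$) follows from the same $f_N$ because the exponent in the integral for $\|f_N\|_p$ drops below $-1$, so $\|f_N\|_p$ is uniformly bounded while $\|III_1 f_N\|_p\to\infty$. The main technical hurdle is the second step: securing the pointwise lower bound on $|III_1(x,y)|$ requires the fine sign analysis of $C(\lambda)$ near $\lambda=0$, a Wronskian-type computation involving subleading terms in the Bessel function expansions; once this is in hand, the remaining arithmetic is routine.
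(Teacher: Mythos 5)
Your reduction to $T_L$ and the disjoint-support argument match the paper, and your arithmetic with the test functions $f_N$ would be fine \emph{if} you had the matching lower bound on the kernel of $III_1$. But that lower bound is precisely the step you have not supplied, and it is the crux. The paper establishes two-sided asymptotics only for $A(\lambda)$ (which is positive, by the cited result from \cite{HS1D}); for $C(\lambda)$ it proves only the upper bounds $|C(\lambda)|\lesssim\lambda^{\gamma_1}$, and the Remark you invoke merely asserts (citing Nix, without proof in this paper) that $C$ has a fixed sign for $\lambda$ small. A fixed sign removes cancellation but gives no magnitude: to upgrade $|III_1(x,y)|\lesssim |x|^{1-d_1}g_{d_1}(|y|)$ to $\simeq$ you also need $|C(\lambda)|\gtrsim\lambda^{\gamma_1}$ near $0$, i.e. the Wronskian/series computation you defer as the ``main technical hurdle.'' In addition, the sign control is only claimed for $\lambda\le\lambda_0$, whereas the integral defining $III_1$ runs over $\bigl(0,(|x|\wedge|y|)^{-1}\bigr)$ (not $(|x|\vee|y|)^{-1}$ as you wrote), which for $|x|\in[1,2]$ reaches up to $\lambda\simeq 1$; you would still have to show the contribution from $(\lambda_0,(|x|\wedge|y|)^{-1})$ is dominated (e.g. by exponential decay of $k_1(\lambda|y|)$ for $|y|$ large). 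As written, the decisive pointwise lower bound is asserted rather than proved, so the argument has a genuine gap.

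The paper sidesteps all of this by testing a different piece: it uses $IV_1$ (equivalently $II_1$), the cross-end component, whose coefficient is $A(\lambda)$. Since $A$ is positive and satisfies two-sided estimates, the upper bounds in the Appendix tables are automatically lower bounds, so $|IV_1(x,y)|\simeq x^{-\alpha}|y|^{-\beta}$ for $1\le x\le|y|$ with $\beta=1$ ($d_1<2$) or $\beta=d_1-1$ ($d_1>2$), no sign analysis of $B$ or $C$ needed. It then takes the single function $f(y)=|y|^{\beta-d_1}(1+\log|y|)^{-1}$, which lies in $L^p$ for every $p\ge p_0$ while $IV_1 f(x)=\infty$ for all $x\ge1$, giving unboundedness for all $p\ge p_0$ at once. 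If you want to keep your route through $III_1$, you must either import the two-sided small-$\lambda$ asymptotics of $C$ from Nix's thesis explicitly or redo that expansion; the simpler fix is to switch to $IV_1$, where the positivity of $A$ does the work for free.
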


\begin{proof}
Since $\nabla \Delta^{-1/2} = T_L + T_H + R_{kl}$ and 
\begin{equation*}
    \|T_H+R_{kl}\|_{p\to p} \lesssim 1 \quad \forall 1<p<\infty.
\end{equation*}
It is enough to verify the discontinuity of $T_L$ on $L^p$ for $p\ge p_0$.

Next, we know that $T_L = I_1+II_1+III_1+IV_1$. It is plain that the negative result follows if we can show one of them is unbounded on $L^{p_0}$ since they have disjoint supports. Note that for $II_1$ and $IV_1$, the upper bound from the tables in Appendix~\ref{appendix} is also a lower bound since the coefficient $A(\lambda)$ is positive for all $\lambda \le 1$. Observe that $IV_1$ is bounded on $L^p$ for $p\in (1,p_0)$ for all cases. Therefore, it suffices to treat $IV_1$ and to prove the unboundedness of $\mathcal{X}_{[1,\infty)}\circ T_L \circ \mathcal{X}_{(-\infty,-1]}$. Note that for all cases we consider, when $1\le x\le |y|$, 
\begin{equation*}
    |IV_1(x,y)|\simeq  x^{-\alpha} |y|^{-\beta}, \quad \textrm{for some}\quad \alpha>0,
\end{equation*}
where $\beta =1$ if $d_1<2$ and $\beta=d_1-1$ if $d_1>2$. Hence, we only need to confirm the discontinuity of the following operator on $L^{p_0}$: 
\begin{equation}\label{C}
    f \mapsto (\cdot)^{-\alpha} \int_{(\cdot)}^\infty y^{-\beta} f(y) y^{d_1-1} dy.
\end{equation}
Let $f(y) = y^{\beta-d_1}(1+\log y)^{-1}$. Since $\beta<d_1$, it is plain that
\begin{gather*}
    \|f\|_{p_0}^{p_0} = \int_1^\infty y^{-p_0(d_1-\beta)}(1+ \log y)^{-p_0}y^{d_1-1}dy
    = \int_0^\infty e^{-cx}(1+x)^{-p_0}dx<\infty,
\end{gather*}
since 
\begin{equation*}
    c = p_0(d_1-\beta)-d_1 = \begin{cases}
        (d_1' \vee d_1)(d_1-1)-d_1 = 0, & 1<d_1<2,\\
        (d_1' \vee d_1)-d_1 = 0, & d_1>2.
    \end{cases}
\end{equation*}
However, for all $ x\ge 1$, \eqref{C} equals
\begin{equation*}
    x^{-\alpha}\int_x^\infty y^{-1}(1+\log y)^{-1}dy = x^{-\alpha}\int_{\log x}^\infty (1+t)^{-1}dt = \infty.
\end{equation*}
The result follows easily.
\end{proof}

The proof of Theorem \ref{thm_main} is now complete.

\section{Reverse Riesz inequality}

In this section, we aim to study the reverse Riesz problem. That is we want to establish inequality:
\begin{align*}
    \|\Delta^{1/2}f\|_p \lesssim \|\nabla f\|_p 
\end{align*}
for some range of $p\in (1,\infty)$.

By \cite{H2}, the author develops a so-called harmonic annihilation method to prove such type of inequalities on a class of non-doubling manifolds.

\begin{subsection}{Hardy's inequality}

Following the method in \cite{H2}, the first step is to establish a Hardy-type inequality. 

\begin{lemma}\label{le_Hardy}
Let $1<d_1<d_2$. The following Hardy-type inequality holds
\begin{align*}
    \int_{\Tilde{\mathbb{R}}} \frac{|f(x)|^p}{|x|^p} d\mu \lesssim \int_{\Tilde{\mathbb{R}}} |f'(x)|^p d\mu, \quad \forall f\in S_0(\Tilde{\mathbb{R}})
\end{align*}
for all $1\le p< d_*$, where $d_* = d_1 \wedge d_2$.
\end{lemma}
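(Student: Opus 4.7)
The plan is to reduce the inequality to a weighted one-dimensional Hardy inequality on $[1,\infty)$, applied independently to each end of $\Tilde{\mathbb{R}}$, and then glued together using the Dirichlet condition $f(\pm 1) = 0$ built into $S_0(\Tilde{\mathbb{R}})$.

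Concretely, I would split the left-hand side as
\begin{equation*}
\int_{\Tilde{\mathbb{R}}} \frac{|f(x)|^p}{|x|^p}\, d\mu = \int_1^\infty |f(r)|^p r^{d_2-1-p}\, dr + \int_1^\infty |f(-r)|^p r^{d_1-1-p}\, dr,
\end{equation*}
and similarly for the right-hand side. By symmetry, it suffices to prove, for each $i \in \{1,2\}$ and each smooth compactly supported $g$ on $[1,\infty)$ with $g(1)=0$, the one-sided Hardy inequality
\begin{equation*}
\int_1^\infty |g(r)|^p r^{d_i-1-p}\, dr \lesssim \int_1^\infty |g'(r)|^p r^{d_i-1}\, dr,
\end{equation*}
under the hypothesis $p < d_i$, which is guaranteed for both $i=1$ and $i=2$ by $p < d_*$.

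For $1 < p < d_i$, my approach is integration by parts against the antiderivative $r^{d_i-p}/(d_i-p)$ of the weight $r^{d_i-1-p}$, which is well defined precisely because $d_i - p > 0$. The boundary term at $r = 1$ vanishes thanks to $g(1)=0$ and the one at infinity vanishes by compact support, leaving
\begin{equation*}
\int_1^\infty |g|^p r^{d_i-1-p}\, dr = -\frac{p}{d_i-p}\int_1^\infty |g|^{p-1} \mathrm{sgn}(g)\, g'\, r^{d_i-p}\, dr.
\end{equation*}
I would then apply H\"older with conjugate exponents $p/(p-1)$ and $p$, splitting the weight as $r^{d_i-p} = r^{(p-1)(d_i-1-p)/p} \cdot r^{(d_i-1)/p}$ so that one factor matches the LHS weight and the other matches the target weight $r^{d_i-1}$. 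Writing $A$ for the LHS and $B$ for the RHS of the desired inequality, this produces $A \le \frac{p}{d_i-p}\, A^{(p-1)/p} B^{1/p}$. Since $A$ is finite by compact support, dividing through and raising to the $p$-th power yields the Hardy inequality with explicit constant $(p/(d_i-p))^p$.

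For the endpoint $p=1$, the H\"older step is vacuous, so instead I would use $g(r) = -\int_r^\infty g'(s)\, ds$ (valid by compact support) together with Fubini:
\begin{equation*}
\int_1^\infty |g(r)|\, r^{d_i-2}\, dr \le \int_1^\infty |g'(s)| \int_1^s r^{d_i-2}\, dr\, ds \le \frac{1}{d_i-1}\int_1^\infty |g'(s)|\, s^{d_i-1}\, ds,
\end{equation*}
using $d_i > 1$.

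The main conceptual point, more a verification than an obstacle, is that no coupling term between the two ends of $\Tilde{\mathbb{R}}$ survives because $f(\pm 1)=0$; this is exactly the role of the Dirichlet condition in $S_0$, and it is what allows the analysis to localize to each end without invoking any Poincar\'e inequality across the join. The constraint $p < d_*$ then enters naturally as the requirement making the weight antiderivative positive and the integration-by-parts constant finite on both ends simultaneously.
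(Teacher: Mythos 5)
Your proof is correct and follows essentially the same route as the paper: both reduce the statement to the classical weighted one-dimensional Hardy inequality on each half-line $[1,\infty)$ separately, using $f(\pm1)=0$ to decouple the two ends, with the condition $p<d_*$ entering exactly as $p<d_i$ on each end. The only difference is that you prove the weighted Hardy inequality from scratch (integration by parts plus H\"older for $p>1$, Fubini for $p=1$), whereas the paper simply quotes it from \cite[Exercise 1.2.8]{G} in the form $\int_0^\infty |\int_x^\infty u|^p x^{b-1}dx \le (p/b)^p\int_0^\infty |u|^p t^{p+b-1}dt$.
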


\begin{proof}
For $0<b<\infty$ and $1\le p<\infty$, it is well-known that (see for example \cite[Exercise 1.2.8]{G})
\begin{align*}
    \int_0^\infty \left| \int_x^\infty u(t) dt \right|^p x^{b-1} dx \le \left(\frac{p}{b}\right)^p \int_0^\infty |u(t)|^p t^{p+b-1} dt.
\end{align*}
Now, let $f$ be a function supported in $[1,\infty)$ such that $f(x) = \int_x^\infty u(t) dt$ and $p+b=d$. The above inequality implies
\begin{align*}
 \int_1^\infty \left|\frac{f(x)}{x}\right|^p x^{d-1} dx \le \left(\frac{p}{b}\right)^p \int_1^\infty |f'(t)|^p t^{d-1} dt,
\end{align*}
provided $1\le p<d$. Immediately, one infers Hardy's inequality on broken line:
\begin{align}\label{Hardy_1D}
    \left\| \frac{f(\cdot)}{|\cdot|} \right\|_{L^p(\Tilde{\mathbb{R}}, d\mu)} \lesssim \|f'\|_{L^p(\Tilde{\mathbb{R}}, d\mu)}, \quad \forall 1\le p< d_*,
\end{align}
for all $f\in S_0(\Tilde{\mathbb{R}})$.
\end{proof}

\end{subsection}

\begin{subsection}{Proof of Theorem~\ref{thm_RR_main}}
In this subsection, we prove Theorem~\ref{thm_RR_main}. Recall notations from Section~\ref{sec_2}. By Theorem~\ref{thm_main} and duality, we only need to consider the range
\begin{align*}
    p\in \begin{cases}
        (1,d_*), & \textit{if}\quad 1<d_*<2,\\
        (1,2), & \textit{if}\quad d_*=2,\\
        (1,d_*'], & \textit{if}\quad d_*> 2.
    \end{cases}
\end{align*}
We use the following resolution to identity:
\begin{equation*}
    \Delta^{1/2} = \int_0^\infty \Delta (\Delta+\lambda^2)^{-1}d\lambda.
\end{equation*}
Let $f\in S_0(\Tilde{\mathbb{R}})$ and $g\in C_c^\infty(\Tilde{\mathbb{R}})$. By the self-adjointness and positivity of $\Delta$, 
\begin{align*}
    \langle \Delta^{1/2}f ,g \rangle &= \left \langle \int_0^\infty \Delta (\Delta+\lambda^2)^{-1}f d\lambda, g \right \rangle = \left\langle \nabla f, \nabla \int_0^\infty (\Delta+\lambda^2)^{-1}g d\lambda \right\rangle\\
    &= \left\langle \nabla f, \nabla \int_0^\infty (\Delta+\lambda^2)_{kk}^{-1}g d\lambda \right\rangle + \left\langle \nabla f, \nabla \int_0^\infty (\Delta+\lambda^2)_{kl}^{-1}g d\lambda \right\rangle.
\end{align*}
By \cite[Theorem 5.1, Remark 5.2]{HS1D}, the "$kl$" part of Riesz transform is bounded on $L^q$ for all $1<q<\infty$. It is then clear that
\begin{equation*}
    \left|\left\langle \nabla f, \nabla \int_0^\infty (\Delta+\lambda^2)_{kl}^{-1}g d\lambda \right\rangle \right| \lesssim  \|f'\|_p \|R_{kl}g\|_{p'} \lesssim \|f'\|_p \|g\|_{p'}. 
\end{equation*}
Hence, it is sufficient to consider the "$kk$" part in the inner product. Let $F(\cdot)$ be, $A(\cdot)$, $B(\cdot)$ or $C(\cdot)$. Set $\mathcal{X}_1$, $\mathcal{X}_2$ to be the characterization functions on intervals $(-\infty,-1]$ and $[1,\infty)$ respectively. We write 
\begin{align*}
    \nabla \int_0^\infty (\Delta+\lambda^2)_{kk}^{-1}g(x)d\lambda &= \sum_{i,j=1}^2 \int_{\Tilde{\mathbb{R}}} \int_0^\infty \lambda k_i'(\lambda|x|) \mathcal{X}_i(x) k_j(\lambda|y|) \mathcal{X}_j(y) F(\lambda)d\lambda g(y)d\mu(y) \\
    &= \sum_{i,j=1}^2 \int_{\Tilde{\mathbb{R}}} \int_{1}^\infty  + \sum_{i,j=1}^2 \int_{\Tilde{\mathbb{R}}} \int_0^{1} := T_hg + \sum_{i,j=1}^2 \int_{\Tilde{\mathbb{R}}} \int_0^{1},
\end{align*}
where $i,j\in \{1,2\}$.

Now, by estimates from Section~\ref{sec_2} and Lemma~\ref{le_TH}, the high energy part $T_h$ is bounded on all $L^q$. Therefore,
\begin{equation*}
    |\langle \nabla f, T_hg \rangle |\lesssim \|f'\|_p \|g\|_{p'}.
\end{equation*}
Hence, it suffices to treat the following bilinear form:
\begin{align*}
    \mathcal{B}(f,g)&:= \sum_{i,j=1}^2 \left\langle \nabla f, \int_0^1 \nabla [k_i(\lambda|x|)] \mathcal{X}_i(x) \int_{\Tilde{\mathbb{R}}} k_j(\lambda|y|) \mathcal{X}_j(y) g(y)d\mu(y) F(\lambda) d\lambda \right\rangle\\
    &= \sum_{i,j=1}^2 \int_{\Tilde{\mathbb{R}}} f'(x) \int_0^1 \left[\frac{d}{dx}k_i(\lambda |x|)\right] \mathcal{X}_i(x) \int_{\Tilde{\mathbb{R}}} k_j(\lambda|y|) \mathcal{X}_j(y) g(y)d\mu(y) F(\lambda) d\lambda d\mu(x).
\end{align*}
Integrating by parts and using the fact \eqref{eq_ODE}:
\begin{align*}
    k_i^{''}(\lambda \cdot) + \frac{d_i-1}{r}k_i'(\lambda \cdot) = \lambda^2 k_i(\lambda \cdot),\quad i=1,2,
\end{align*}
we deduce that
\begin{align*}
    \mathcal{B}(f,g) &= - \sum_{i,j=1}^2 \int_{\Tilde{\mathbb{R}}} f(x) \int_0^1 \lambda^2 k_i(\lambda |x|) \int_{\Tilde{\mathbb{R}}} k_j(\lambda|y|)g(y)d\mu(y) F(\lambda) d\lambda d\mu(x).
\end{align*}
Define operator
\begin{align}
    \mathcal{T}_{ij}: h \mapsto |\cdot|\int_0^1 \lambda^2 F(\lambda) k_i(\lambda |\cdot|) \mathcal{X}_i(\cdot) \int_{\Tilde{\mathbb{R}}} k_j(\lambda|y|) \mathcal{X}_j(y) h(y)d\mu(y) d\lambda, \quad h\in C_c^\infty(\Tilde{\mathbb{R}}).
\end{align}
By  Hölder's inequality, for $1<q<\infty$,
\begin{align*}
    |\mathcal{T}_{ij}g(x)|\le \|g\|_{q'} |x| \mathcal{X}_i(x) \int_0^1 \lambda^{2} |F(\lambda)| |k_i(\lambda |x|)| \left[\int_{\Tilde{\mathbb{R}}} k_j(\lambda|y|)^q \mathcal{X}_j(y) d\mu(y)\right]^{\frac{1}{q}} d\lambda.
\end{align*}
By estimates from Section~\ref{sec_2}, one can assume
\begin{align*}
    |F(\lambda)| \lesssim \lambda^{\sigma}, \quad \forall \lambda \le 1,
\end{align*}
and
\begin{align*}
    |k_i(r)| \lesssim \begin{cases}
        \begin{cases}
            1, & 1<d_i<2,\\
            1-\log{r}, & d_i=2,\\
            r^{2-d_i}, & d_i>2,
        \end{cases} &r\le 1,\\
        r^{\frac{1-d_i}{2}} e^{-r}, &r\ge 1,
    \end{cases}
\end{align*}
where $\sigma \in \mathbb{R}$ depending on which quadrant we are studying.

\begin{lemma}\label{le_key}
Let $1<d_1<d_2$. Then, for all $1\le i,j\le 2$,
\begin{align*}
    \|\mathcal{T}_{ij}g\|_{q'} \lesssim \|g\|_{q'},\quad 1<q<\infty,
\end{align*}
for all $g\in C_c^\infty(\Tilde{\mathbb{R}})$.
\end{lemma}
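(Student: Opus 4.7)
The plan is to exploit the pointwise Hölder bound displayed just above the statement. Writing
$$\Phi_{ij}(x):=|x|\mathcal{X}_i(x)\int_0^1\lambda^{2}|F(\lambda)|\,|k_i(\lambda|x|)|\,\|k_j(\lambda\cdot)\mathcal{X}_j\|_{L^q(d\mu)}\,d\lambda,$$
that inequality reads $|\mathcal{T}_{ij}g(x)|\le\|g\|_{q'}\Phi_{ij}(x)$, so taking $L^{q'}(d\mu)$-norms in $x$ reduces the lemma to showing $\Phi_{ij}\in L^{q'}(\Tilde{\mathbb{R}},d\mu)$ for each fixed $q'\in(1,\infty)$. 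The underlying intuition is that the polynomial decay generated by the low-frequency resolvent integral against $k_i$ compensates the troublesome $|x|$ prefactor.

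I would first evaluate the inner norm by the change of variables $u=\lambda|y|$, which gives
$$\|k_j(\lambda\cdot)\mathcal{X}_j\|_{L^q(d\mu)}^{q}\simeq\lambda^{-d_j}\int_\lambda^\infty|k_j(u)|^{q}u^{d_j-1}\,du.$$
The large-argument exponential decay of $k_j$ handles the tail on $[1,\infty)$, while on $[\lambda,1]$ one applies the small-argument asymptotics from Section~\ref{sec_2}; the logarithmic case $d_j=2$ is absorbed through the $C_\epsilon r^{-\epsilon}$ device invoked in the earlier remark. The outcome is a clean power bound $\|k_j(\lambda\cdot)\mathcal{X}_j\|_{L^q(d\mu)}\lesssim\lambda^{-\gamma_j(q)}$, with $\gamma_j(q)=d_j/q$ when $q<d_j/(d_j-2)$ (in particular whenever $d_j\le 2$) and $\gamma_j(q)=d_j-2$ otherwise.

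Substituting this into $\Phi_{ij}$ and splitting the remaining $\lambda$-integral at $1/|x|$, I would apply the small-argument asymptotics of $k_i$ on $(0,1/|x|]$ and the large-argument ones on $[1/|x|,1]$; on the outer piece the substitution $u=\lambda|x|$ turns the exponential factor into a bounded quantity. Both pieces agree in order and yield the pointwise estimate
$$\Phi_{ij}(x)\lesssim|x|^{-2-\sigma+\gamma_j(q)},$$
where $\sigma$ is the small-$\lambda$ exponent in $|F(\lambda)|\lesssim\lambda^{\sigma}$ read off the tables at the end of Section~\ref{sec_2}.

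Finally, $\Phi_{ij}\in L^{q'}$ amounts to $q'(2+\sigma-\gamma_j(q))>d_i$; after eliminating $q$ via $1/q+1/q'=1$ and optimising over $q'\in(1,\infty)$, this collapses to an inequality purely in $\sigma,d_i,d_j$. I expect the bulk of the remaining labour to be the finite case check across $F\in\{A,B,C\}$, the index pairs $(i,j)\in\{1,2\}^2$ compatible with the support structure of \eqref{eq_y>1}--\eqref{eq_y<1}, and the five dimensional regimes $1<d_1<d_2<2$, $1<d_1<d_2=2$, $1<d_1<2<d_2$, $2=d_1<d_2$, $2<d_1<d_2$; in each of them the exponent $\sigma$ from the tables of Section~\ref{sec_2} meets the required bound, often with equality, which is harmless because $q'>1$ strictly. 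The borderline regime $d_*=2$ is the main obstacle because of the $\epsilon$-loss from the logarithm, but no inequality is saturated in the critical direction there, so the argument closes uniformly in $q'\in(1,\infty)$.
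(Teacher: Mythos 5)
Your computational skeleton is essentially the paper's: estimate the inner norm $\mathcal{I}_j(\lambda)=\|k_j(\lambda\cdot)\mathcal{X}_j\|_{L^q(d\mu)}\lesssim\lambda^{-\gamma_j(q)}$, split the $\lambda$-integral at $1/|x|$, and check integrability of the resulting power of $|x|$ case by case. The gap is in your final integrability claim. For the piece $\mathcal{T}_{11}$ (coefficient $C(\lambda)$, both variables on the end of dimension $d_1=d_*$) with $d_1<2$, the small-$\lambda$ exponent is $\sigma=d_1-2$, so $2+\sigma=d_1$ and $\gamma_1(q)=d_1/q$, hence your exponent is $2+\sigma-\gamma_1(q)=d_1/q'$ and the condition $q'\bigl(2+\sigma-\gamma_j(q)\bigr)>d_i$ reads $d_1>d_1$: it holds only with equality, and this happens for \emph{every} $q\in(1,\infty)$. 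Contrary to your assertion, equality is not harmless: $\int_1^\infty x^{-q'\cdot d_1/q'}x^{d_1-1}\,dx=\int_1^\infty x^{-1}\,dx=\infty$, i.e. $|x|^{-d_1/q'}\notin L^{q'}\bigl((-\infty,-1],|r|^{d_1-1}dr\bigr)$ no matter how large $q'$ is. So the pure H\"older/strong-type route ($\Phi_{11}\in L^{q'}$) fails in the regimes $1<d_1<d_2<2$, $1<d_1<d_2=2$ and $1<d_1<2<d_2$; it only closes when $d_1>2$, where the small-argument factor $(\lambda|x|)^{2-d_1}$ of $k_1$ (which is also missing from your displayed formula for $\Phi_{ij}$) gives genuine room. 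The borderline is not the logarithmic loss at $d_j=2$ you single out — that is disposed of by the $C_\epsilon\lambda^{-\epsilon}$ trick — but this exact scaling identity for $\mathcal{T}_{11}$.

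The paper closes precisely this case by a different device: the pointwise bound $|\mathcal{T}_{11}g(x)|\lesssim\|g\|_{q'}\,|x|^{-d_1/q'}\mathcal{X}_1(x)$ shows that $\mathcal{T}_{11}$ maps $L^{q'}$ into \emph{weak} $L^{q'}$, since the majorant belongs to $L^{q',\infty}$ of the measure $|r|^{d_1-1}dr$ even though it is not in $L^{q'}$; because this weak-type $(q',q')$ bound holds for every $q'\in(1,\infty)$, Marcinkiewicz interpolation between two nearby exponents upgrades it to the strong bound at each fixed $q'$. Your argument needs this (or an equivalent substitute, e.g. a sharper kernel estimate for the $C(\lambda)$-term) to handle $\mathcal{T}_{11}$; as written, the step ``equality is harmless because $q'>1$'' is false and the proof does not close.
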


\begin{proof}
We start by estimating the integral:
\begin{align*}
\mathcal{I}_j(\lambda):=\left[\int_{\Tilde{\mathbb{R}}} k_j(\lambda|y|)^q \mathcal{X}_j(y) d\mu(y)\right]^{\frac{1}{q}},\quad \lambda \le 1.
\end{align*}
Directly, we confirm
\begin{align*}
    \mathcal{I}_j(\lambda)^q &\lesssim  \begin{cases}
        \int_1^{\lambda^{-1}} y^{d_j-1} dy, & 1<d_j< 2,\\
        \int_1^{\lambda^{-1}}(1-\log{\lambda y})^q y^{d_j-1} dy, & d_j=2,\\
        \int_1^{\lambda^{-1}}(\lambda y)^{q(2-d_i)} y^{d_j-1} dy, & d_j> 2,
    \end{cases}\\
    &+ \int_{\lambda^{-1}}^\infty (\lambda y)^{\frac{1-d_j}{2}q} e^{-q\lambda y} y^{d_j-1} dy.
\end{align*}
One checks easily that the second term can be bounded by
\begin{align*}
    \int_{\lambda^{-1}}^\infty (\lambda y)^{\frac{1-d_j}{2}q} e^{-q\lambda y} y^{d_j-1} dy \lesssim \lambda^{-d_j}.
\end{align*}
As for the first term, a careful calculation gives an upper bound:
\begin{align*}
    \begin{cases}
        \lambda^{-d_j}, & 1<d_j\le 2,\\
        \begin{cases}
            \lambda^{-d_j}, & 1<q<\frac{d_j}{d_j-2},\\
            \lambda^{-d_j} \log{\lambda^{-1}}, & q=\frac{d_j}{d_j-2},\\
            \lambda^{q(2-d_j)}, & q>\frac{d_j}{d_j-2},
        \end{cases}\quad &d_j>2.
    \end{cases}
\end{align*}
Summarize the above estimates to get
\begin{align*}
    \mathcal{I}_j(\lambda) \lesssim \begin{cases}
        \lambda^{-d_j/q}, & 1<d_j\le 2,\\
        \begin{cases}
            \lambda^{-d_j/q}, & 1<q<\frac{d_j}{d_j-2},\\
            \lambda^{-d_j/q}(1+ \log{\lambda^{-1}})^{1/q}, & q=\frac{d_j}{d_j-2},\\
            \lambda^{2-d_j}, & q>\frac{d_j}{d_j-2},
        \end{cases}\quad &d_j>2.
    \end{cases}
\end{align*}
Next, we analyze each case separately.
\medskip

$\bullet$ \textit{Case 1. $1<d_1<d_2<2$}. In this case, we have $\sigma +2 = d_2$, $2d_2-d_1$ or $d_1$ and $|k_i(\lambda|x|)|\lesssim 1$. Hence,
\begin{align*}
    |\mathcal{T}_{ij}g(x)| &\lesssim \|g\|_{q'} |x| \mathcal{X}_{i}(x) \left[\int_0^{|x|^{-1}} \lambda^{2+\sigma-d_j/q} d\lambda + \int_{|x|^{-1}}^\infty \lambda^{2+\sigma-d_j/q} (\lambda |x|)^{\frac{1-d_i}{2}} e^{-\lambda|x|} d\lambda \right]\\
    &\lesssim \|g\|_{q'} |x| \mathcal{X}_{i}(x) \left[\int_0^{|x|^{-1}} \lambda^{2+\sigma-d_j/q} d\lambda + |x|^{-1+d_j/q - (2+\sigma)} \right].
\end{align*}
Note that $\mathcal{T}_{12}, \mathcal{T}_{21}$ correspond to coefficient $A(\lambda)$, which means for these two operators, we have $2+\sigma = d_2$. Therefore,
\begin{align*}
    |\mathcal{T}_{12}g(x)| \lesssim \|g\|_{q'} |x| \mathcal{X}_{1}(x) \left[\int_0^{|x|^{-1}} \lambda^{d_2-d_2/q} d\lambda + |x|^{-1+d_2/q - d_2} \right] \lesssim \|g\|_{q'} |x|^{-d_2/q'} \mathcal{X}_{1}(x),
\end{align*}
and
\begin{align*}
    |\mathcal{T}_{21}g(x)| \lesssim \|g\|_{q'} |x| \mathcal{X}_{2}(x) \left[\int_0^{|x|^{-1}} \lambda^{d_2-d_1/q} d\lambda + |x|^{-1+d_1/q - d_2} \right] \lesssim \|g\|_{q'} |x|^{d_1-d_2- d_1/q'} \mathcal{X}_{2}(x).
\end{align*}
Similarly, $\mathcal{T}_{11}, \mathcal{T}_{22}$ correspond to coefficients $C(\lambda)$ and $B(\lambda)$ respectively. Consequently,
\begin{align*}
    |\mathcal{T}_{11}g(x)|\lesssim \|g\|_{q'} |x|^{-d_1/q'} \mathcal{X}_{1}(x),\quad |\mathcal{T}_{22}g(x)|\lesssim \|g\|_{q'} |x|^{d_1-d_2-d_2/q'} \mathcal{X}_{2}(x).
\end{align*}
Now, for $\mathcal{T}_{12}, \mathcal{T}_{21}, \mathcal{T}_{22}$, we use Hölder's inequality to derive
\begin{align*}
    \|(\mathcal{T}_{12} + \mathcal{T}_{21} + \mathcal{T}_{22})g\|_{q'} \lesssim \|g\|_{q'},\quad \forall 1<q<\infty, 
\end{align*}
since $d_1<d_2$. While for $\mathcal{T}_{11}$, a weak type argument yields that for all $\delta>0$,
\begin{align*}
    \mu \left(\left\{ x\in \Tilde{\mathbb{R}}; |\mathcal{T}_{11}g(x)| > \delta   \right\}\right) &\lesssim \mu \left(\left\{ x\in (-\infty, -1]; C\|g\|_{q'} |x|^{-d_1/q'} > \delta   \right\}\right)\\
    &= \mu \left(\left\{ x\in (-\infty, -1]; |x| \le C \left(\frac{\|g\|_{q'}}{\delta}\right)^{q'/d_1}   \right\}\right)\\
    &\lesssim \left(\frac{\|g\|_{q'}}{\delta}\right)^{q'},
\end{align*}
i.e. $\mathcal{T}_{11}$ is of weak type $(q',q')$ for all $1<q<\infty$. By interpolation, we end up with conclusion:
\begin{align*}
    \|\mathcal{T}_{ij}g\|_{q'} \lesssim \|g\|_{q'},\quad 1<q<\infty,
\end{align*}
for all $1\le i,j\le 2$
\medskip

$\bullet$ \textit{Case 2. $1<d_1<d_2=2$}. We use the same method as in the \textit{Case 1}. We end up with estimates:
\begin{align*}
    |\mathcal{T}_{ij}g(x)| \lesssim \|g\|_{q'} \begin{cases}
        |x|^{-d_1/q'} \mathcal{X}_1(x), & i=1, j=1,\\
        |x|^{-2/q'} \mathcal{X}_1(x), & i=1, j=2,\\
        |x|^{d_1-2-d_1/q'}\mathcal{X}_2(x), & i=2, j=1,\\
        |x|^{d_1-2-2/q'} \mathcal{X}_2(x), & i=2, j=2.
    \end{cases}
\end{align*}
The $L^{q'}\textit{-}$boundedness of $\mathcal{T}_{12}, \mathcal{T}_{21}, \mathcal{T}_{22}$ are guaranteed by Hölder's inequality. While the boundedness of $\mathcal{T}_{11}$ is ensured by a weak type estimate and interpolation as showed in the \textit{Case~1}.

\medskip

$\bullet$ \textit{Case 3. $1<d_1<2<d_2$}. We have
\begin{align*}
    |\mathcal{T}_{ij}g(x)| \lesssim \|g\|_{q'} \begin{cases}
        |x|^{-d_1/q'} \mathcal{X}_1(x), & i=1, j=1,\\
        |x|^{d_1-d_2-d_1/q'}\mathcal{X}_2(x), & i=2, j=1.\\
    \end{cases}
\end{align*}
Moreover, for any $\epsilon>0$
\begin{align*}
    |\mathcal{T}_{ij}g(x)| \lesssim_{\epsilon} \|g\|_{q'} \begin{cases}
        \mathcal{X}_1(x) \begin{cases}
            |x|^{-d_2/q'}, & 1<q<\frac{d_2}{d_2-2},\\
            |x|^{-d_2/q'+\epsilon}, & q = \frac{d_2}{d_2-2},\\
            |x|^{-2}, & q> \frac{d_2}{d_2-2},
        \end{cases}, & i=1, j=2,\\
        \mathcal{X}_2(x) \begin{cases}
            |x|^{d_1-d_2-d_2/q'}, & 1<q<\frac{d_2}{d_2-2},\\
            |x|^{-2+\epsilon +d_1-d_2}, & q = \frac{d_2}{d_2-2},\\
            |x|^{d_1-d_2-2}, & q> \frac{d_2}{d_2-2},
        \end{cases}, & i=2, j=2,\\
    \end{cases}
\end{align*}
where the parameter $\epsilon$ comes from the simple estimate: 
\begin{align*}
    1-\log{\lambda} \le C_\epsilon \lambda^{-\epsilon},\quad \forall 0<\lambda\le 1.
\end{align*}
The rest of estimates are similar to cases as before, i.e. we use weak type argument to $\mathcal{T}_{11}$ and apply Hölder's inequality to $\mathcal{T}_{12},\mathcal{T}_{21},\mathcal{T}_{22}$. In particular, when $q = \frac{d_2}{d_2-2}$, we choose $\epsilon <\frac{d_2-d_1}{q'}$ and $\epsilon < 2-d_1$ for the esitmates of $\mathcal{T}_{12}$ and $\mathcal{T}_{22}$ respectively.

\medskip

$\bullet$ \textit{Case 4. $2<d_1<d_2$}. We have for any $\epsilon>0$,
\begin{align*}
    |\mathcal{T}_{ij}g(x)| \lesssim_{\epsilon} \|g\|_{q'} \begin{cases}
        \mathcal{X}_1(x) \begin{cases}
            |x|^{2-d_1-d_1/q'}, & 1<q<\frac{d_1}{d_1-2},\\
            |x|^{2-d_1-d_1/q'+\epsilon}, & q = \frac{d_1}{d_1-2},\\
            |x|^{-d_1}, & q> \frac{d_1}{d_1-2},
        \end{cases}, & i=1, j=1,\\
        \mathcal{X}_1(x) \begin{cases}
            |x|^{2-d_1-d_2/q'}, & 1<q<\frac{d_2}{d_2-2},\\
            |x|^{2-d_1-d_2/q'+\epsilon}, & q = \frac{d_2}{d_2-2},\\
            |x|^{-d_1}, & q> \frac{d_2}{d_2-2},
        \end{cases}, & i=1, j=2,\\
        \mathcal{X}_2(x) \begin{cases}
            |x|^{2-d_2-d_1/q'}, & 1<q<\frac{d_1}{d_1-2},\\
            |x|^{2-d_2-d_1/q'+\epsilon}, & q = \frac{d_1}{d_1-2},\\
            |x|^{-d_2}, & q> \frac{d_1}{d_1-2},
        \end{cases}, & i=2, j=1,\\
        \mathcal{X}_2(x) \begin{cases}
            |x|^{2-d_2-d_2/q'}, & 1<q<\frac{d_2}{d_2-2},\\
            |x|^{2-d_2-d_2/q'+\epsilon}, & q = \frac{d_2}{d_2-2},\\
            |x|^{-d_2}, & q> \frac{d_2}{d_2-2},
        \end{cases}, & i=2, j=2.
    \end{cases}
\end{align*}
The proof follows by applying Hölder's inequality to all situations and we omit details.

\end{proof}

Recall bilinear form:
\begin{align*}
    \mathcal{B}(f,g) &= - \sum_{i,j=1}^2 \int_{\Tilde{\mathbb{R}}} f(x) \int_0^1 \lambda^2 k_i(\lambda |x|) \int_{\Tilde{\mathbb{R}}} k_j(\lambda|y|)g(y)d\mu(y) F(\lambda) d\lambda d\mu(x)\\
    &= - \sum_{i,j=1}^2 \int_{\Tilde{\mathbb{R}}} \frac{f(x)}{|x|} \mathcal{T}_{ij}g(x) d\mu(x).
\end{align*}
By Hölder's inequality, Lemma~\ref{le_key} and Lemma~\ref{le_Hardy}, we conclude for $1<p<d_*$,
\begin{align*}
    |\mathcal{B}(f,g)|\le \sum_{i,j=1}^2 \left\| \frac{f}{|\cdot|}\right\|_p \|\mathcal{T}_{ij}g\|_{p'} \lesssim \|\nabla f\|_p \|g\|_{p'}.
\end{align*}
By ranging $g\in C_c^\infty(\Tilde{\mathbb{R}})$ with $\|g\|_{p'}=1$, we finally deduce for all $p$ in \eqref{range},
\begin{align*}
    \|\Delta^{1/2}f\|_p = \sup_{\|g\|_{p'}=1} \left| \left\langle \Delta^{1/2}f, g \right\rangle \right| = \sup_{\|g\|_{p'}=1} \left| \left\langle \nabla f, R_{kl}g \right\rangle + \left\langle \nabla f, T_{h}g \right\rangle + \mathcal{B}(f,g) \right|\lesssim \|\nabla f\|_p 
\end{align*}
as desired.

The proof of Theorem~\ref{thm_RR_main} is now complete.
\end{subsection}

\appendix
\section{Estimates for Low Energy Parts}\label{appendix}
We summarize the estimates for each quadrant in the following tables. 

\textbf{If} $1<d_1<d_2<2$.
\begin{table}[H]
    \centering
    \begin{tabular}{|c|c|c|c|c|}
    \hline
    \diagbox{If}{Qua}     & $I_1$ & $II_1$ & $III_1$ & $IV_1$   \\
    \hline
    $|x|\le |y|$ & $|x|^{1-d_2}|y|^{d_1-d_2-1}$     & $|x|^{1-d_1}|y|^{d_1-d_2-1}$ & $|x|^{1-d_1}|y|^{-1}$
     & $|x|^{1-d_2}|y|^{-1}$ 
    \\
    \hline
    $|x|\ge |y|$ &  $|x|^{d_1-2d_2}$ & $|x|^{-d_2}$  &  $|x|^{-d_1}$
     & $|x|^{-d_2}$ 
     \\
    \hline
    \end{tabular}
    \label{tab2}
\end{table}

$\textbf{If}$ $1<d_1<d_2=2$. Then for any $\epsilon>0$
\begin{table}[H]
    \centering
    \begin{tabular}{|c|c|c|c|c|}
    \hline
    \diagbox{If}{Qua}     & $I_1$ & $II_1$ & $III_1$ & $IV_1$   \\
    \hline
    $|x|\le |y|$ & $|x|^{-1}|y|^{d_1-3}$     & $|x|^{1-d_1}|y|^{d_1-3}$ & $|x|^{1-d_1}|y|^{-1}$
     & $|x|^{-1}|y|^{-1}$ 
    \\
    \hline
    $|x|\ge |y|$ &  $|x|^{d_1-4+\epsilon}|y|^{-\epsilon}$ & $|x|^{-2+\epsilon}|y|^{-\epsilon}$  &  $|x|^{-d_1}$
     & $|x|^{-2}$ 
     \\
    \hline
    \end{tabular}
    \label{tab3}
\end{table}

$\textbf{If}$ $1<d_1<2<d_2$.
\begin{table}[H]
    \centering
    \begin{tabular}{|c|c|c|c|c|}
    \hline
    \diagbox{If}{Qua}    & $I_1$ & $II_1$ & $III_1$ & $IV_1$   \\
    \hline
    $|x|\le |y|$ & $|x|^{1-d_2}|y|^{d_1-d_2-1}$     & $|x|^{1-d_1}|y|^{d_1-d_2-1}$ & $|x|^{1-d_1}|y|^{-1}$
     & $|x|^{1-d_2}|y|^{-1}$ 
    \\
    \hline
    $|x|\ge |y|$ &  $|x|^{d_1-d_2-2}|y|^{2-d_2}$ & $|x|^{-2}|y|^{2-d_2}$  &  $|x|^{-d_1}$
     & $|x|^{-d_2}$ 
     \\
    \hline
    \end{tabular}
    \label{tab4}
\end{table}

$\textbf{If}$ $2<d_1<d_2$.
\begin{table}[H]
    \centering
    \begin{tabular}{|c|c|c|c|c|}
    \hline
    \diagbox{If}{Qua}     & $I_1$ & $II_1$ & $III_1$ & $IV_1$   \\
    \hline
    $|x|\le |y|$ & $|x|^{1-d_2}|y|^{1-d_2}$     & $|x|^{1-d_1}|y|^{1-d_2}$ & $|x|^{1-d_2}|y|^{1-d_1}$
     & $|x|^{1-d_1}|y|^{1-d_1}$ 
    \\
    \hline
    $|x|\ge |y|$ &  $|x|^{-d_2}|y|^{2-d_2}$ & $|x|^{-d_1}|y|^{2-d_2}$  &  $|x|^{-d_2}|y|^{2-d_1}$
     & $|x|^{-d_1}|y|^{2-d_1}$ 
     \\
    \hline
    \end{tabular}
    \label{tab5}
\end{table}

{\bf Acknowledgments.} 
The author wants to thank his Ph.D. supervisor Adam Sikora and Professor Andrew Hassell for introducing him to the topic and giving valuable suggestions.

\bibliographystyle{abbrv}
\bibliography{name}

\end{document}